\documentclass[pdflatex,sn-basic, Numbered]{sn-jnl}% Math and Physical Sciences Numbered Reference Style
\usepackage{graphicx}%
\usepackage{multirow}%
\usepackage{amsmath,amssymb,amsfonts}%
\usepackage{amsthm}%
\usepackage{mathrsfs}%
\usepackage[title]{appendix}%
\usepackage{xcolor}%
\usepackage{enumitem}%
\usepackage{textcomp}%
\usepackage{manyfoot}%
\usepackage{booktabs}%
\usepackage{algorithm}%
\usepackage{algorithmicx}%
\usepackage{algpseudocode}%
\usepackage{listings}
\usepackage{dsfont}
\usepackage{subcaption}
\theoremstyle{thmstyleone}%
\newtheorem{theorem}{Theorem}%  meant for continuous numbers
\newtheorem{proposition}[theorem]{Proposition}% 
\newtheorem{lemma}[theorem]{Lemma}
\theoremstyle{thmstyletwo}%

\theoremstyle{thmstylethree}%

\newcommand{\bs}[1]{\boldsymbol{#1}}

\begin{document}

\title[Article Title]{A Deferred Correction, Continuous Galerkin Method for Curvilinear Staggered-Grid Lagrangian Hydrodynamics} 

%%=============================================================%%
%% GivenName	-> \fnm{Joergen W.}
%% Particle	-> \spfx{van der} -> surname prefix
%% FamilyName	-> \sur{Ploeg}
%% Suffix	-> \sfx{IV}
%% \author*[1,2]{\fnm{Joergen W.} \spfx{van der} \sur{Ploeg} 
%%  \sfx{IV}}\email{iauthor@gmail.com}
%%=============================================================%%

\author*[1]{\fnm{Steven} \sur{Walton}}\email{stevenw@lanl.gov}

\author[2]{\fnm{Svetlana} \sur{Tokareva}}\email{tokareva@lanl.gov}
% \equalcont{These authors contributed equally to this work.}

\author[3]{\fnm{Nathaniel} \sur{Morgan}}\email{nmorgan@lanl.gov}
% \equalcont{These authors contributed equally to this work.}

\affil*[2,3,1]{\orgname{Los Alamos National Laboratory}, \orgaddress{\city{Los Alamos}, \postcode{87545}, \state{New Mexico}, \country{USA}}}

% \affil[2]{\orgdiv{Department}, \orgname{Organization}, \orgaddress{\street{Street}, \city{City}, \postcode{10587}, \state{State}, \country{Country}}}

% \affil[3]{\orgdiv{Department}, \orgname{Organization}, \orgaddress{\street{Street}, \city{City}, \postcode{610101}, \state{State}, \country{Country}}}

%%==================================%%
%% Sample for unstructured abstract %%
%%==================================%%

\abstract{We present a continuous Galerkin, Deferred Correction (cG-DeC) method for the equations of Lagrangian
hydrodynamics. The proposed scheme combines a high-order continuous finite element discretization
with an explicit Deferred Correction time integrator, yielding a formulation
that avoids the inversion of a global sparse mass matrix at each update. To stabilize the method in the
presence of strong shocks we introduce a modification of the hyperviscosity model that is compatible with
the cG-DeC framework and likewise does not require a global solve to construct the finite element
approximation to the polyharmonic operator. We further present an alternative reformulation of
the DeC iteration which admits a simple recursive algorithmic structure and provides insight
into previously observed convergence behavior of explicit DeC applied to hyperbolic partial differential equations.
Conservation of momentum and total energy of the resulting fully discrete scheme is analyzed. A set of numerical experiments
illustrates the accuracy and robustness of the cG-DeC scheme. Comparisons with a continuous Galerkin Runge-Kutta (cG-RK) method are provided.}

\keywords{Lagrangian hydrodynamics, finite elements, deferred correction}

\pacs[MSC Classification]{65M60, 76M10, 65M12}

\maketitle

\section{Introduction}\label{sec:Intro}

Recently, a careful review of historical records has shed new light
on the early development of Lagrangian hydrodynamics codes.
The principal figures behind this work were Skyrme, Peierls, von Neumann, and Richtmyer \cite{origins_Lag_Hydro}.
The foundations laid by these towering minds of the last century continue to inspire research in numerical methods today.  Owing to their accuracy and robustness, Lagrangian schemes remain the method of choice for a wide range of continuum mechanics
applications. In the setting of compressible flow, the original schemes of Skyrme, Richtmyer, and von Neumann took the form of
one-dimensional, staggered finite difference methods aimed almost exclusively at the simulation of shock dynamics.
This focus gave rise to influential innovations such as the concept of artificial viscosity, generally credited to the landmark
paper of von Neumann and Richtmyer \cite{vonNeumann.1950}. We refer the reader to
\cite{origins_Lag_Hydro, Mattsson.2015, Margolin2023, Margolin.2024.Richtmyer} for engaging discussions on the historical
development of these methods, including the perhaps lesser-known contributions of Peierls, Skyrme, and Richtmyer.

Without exaggeration, the field of numerical methods for hyperbolic conservation laws has expanded enormously since those
early days, with foundational contributions from Godunov \cite{Godunov.1959}, Shu \cite{Shu}, Titarev and Toro \cite{Titarev.2004},
and Cockburn and Shu \cite{CockburnShu}, to name only a few, usually in the context of finite volume or discontinuous Galerkin methods. High-order discontinuous Galerkin
formulations for Lagrangian schemes were proposed in \cite{Lieberman.2019, Liu.2018, Liu.2019, Liu.2022}. Significant advances were
made in continuous finite element methods for hyperbolic balance laws, as exemplified by 
\cite{selmin_formaggia_1996, Abgrall.2017, DeconinckHirsch1981, CsikDeconinck2002, Abgrall2006ENO, RicchiutoCsikDeconinck2005, RicchiutoBollermann2009, Abgrall2006CurrentStatus}.

It was the influential work of Dobrev et al. \cite{Dobrev.2012}, in which the authors introduced a continuous,
high-order finite element method for Lagrangian hydrodynamics, that demonstrated that high-order continuous Galerkin methods 
are an effective option in Lagrangian simulations. A continuous treatment of the momentum equation has the appealing
feature of removing the need for a complicated multi-dimensional Riemann solver while stably advancing the equation of motion. 
This advantage, however, comes at the cost of having to invert a global sparse matrix at each momentum update. In \cite{RD1D}, and
subsequently in the multi-dimensional case \cite{RD_SGH}, a staggered-grid Residual Distribution (RD) scheme was developed
using a continuous Galerkin approximation. The RD method employs a Deferred Correction (DeC)
time-integration procedure that allows high-order mass lumping and thereby avoids inverting a global mass matrix, 
but it requires a ``distribution step'' to achieve high-order, owing to a first-order viscosity term present in the residuals.

In the present work, we apply a continuous Galerkin, Deferred Correction (cG-DeC) method to the equations of Lagrangian hydrodynamics.
In the context of hyperbolic balance laws, the cG-DeC method has been employed in Eulerian coordinates for linear advection problems, the Euler equations \cite{micalizziThesis, MicalizziTorlo2024},
and the shallow water equations where a well-balanced, high-order stabilization scheme was derived \cite{Micalizzi.2024}.  The cG-DeC method avoids the distribution step present in
Residual Distribution schemes, but then requires a stabilization term capable of maintaining arbitrary high-order.  In the settings just mentioned, the 
Continuous Interior Penalty (CIP) stabilization \cite{Burman.2004} is one such high-order stabilization that is successfully employed, but is ineffective on strong shocks.  In \cite{Bello-Maldonado.2020}, a polyharmonic limiter was introduced to obtain high-order convergence with an activated tensor artificial viscosity resulting in the hyperviscosity model.
In the present work, we provide a modification of the hyperviscosity model which is suitable for the cG-DeC method, as it does not rely on the inversion of a global 
sparse mass matrix to construct the finite element approximation to the polyharmonic operator.

The DeC iteration presented in \cite{Abgrall.2017} provides a theoretical basis for a fixed
number of iterations to achieve a desired accuracy.  However, it has been observed that the theoretical number of corrections \cite{micalizziThesis} may not give the expected order of convergence.
In the present article, we present an alternative reformulation of the DeC iteration which sheds some light on this phenomenon.

The avoidance of global mass-matrix
inversion is a notable design feature. In fact, on modern exascale, heterogeneous architectures, the
Krylov solvers typically used to invert the mass matrix introduce communication-bound operations
such as global dot products and norm evaluations, whose synchronization cost can dominate solver
performance
\cite{Hoemmen.2010, Carson2015CommunicationAvoidingKrylov}. Discretizations that bypass the global solve altogether are therefore
an attractive design target for high-order finite element methods.

\subsubsection*{Outline}
The remainder of the article proceeds as follows.  In Section \ref{sec:GovEqns} we recall the equations of Lagrangian hydrodynamics.  In Section \ref{sec:FE}, we describe the finite element discretization and stabilization mechanism.  
In Section \ref{sec:DeC} we provide a short description of the explicit Deferred Correction (DeC) method, following the formalism of Abgrall \cite{Abgrall.2017}.
We then present a new formulation for the explicit DeC method which allows for a simple recursive algorithmic formulation that is easy to incorporate within existing computational physics codes and provides some insight on existing convergence issues of explicit DeC methods applied to partial
differential equations.  Finally, the new DeC formulation is applied to the equations of Lagrangian hydrodynamics and the fully discrete cG-DeC scheme is analyzed.
The conservation of momentum and total energy are considered. In Section \ref{sec:Tests}, we provide several numerical illustrations of the cG-DeC scheme and provide comparisons with existing staggered-grid finite element methods. To conclude, in Section \ref{sec:Conclusions} we discuss some future research directions.

\section{Governing Equations}\label{sec:GovEqns}
We assume a Lagrangian (co-moving) reference frame and will make use of the notation \( \frac{\mathrm{d}}{\mathrm{d}t} \) to denote the material derivative.
 Let $(t,\bs x)\in [0, T]\times\Omega_t\subseteq\mathbb R^+\times\mathbb{R}^d$, $T>0$ 
for $d\in\{1,2,3\}$. The hydrodynamic pressure will be denoted by \( p = p(t,\bs x)\) and the stress tensor for the system is then 
\( \bs \sigma = -p {\mathrm{I}} + \bs \sigma_a \) with ${\mathrm{I}}$ the $d\times d$ identity matrix and \( \bs \sigma_a \) an artificial viscosity to
be described in Section \ref{sec:FE} below.  The conservation of mass demands that
    \begin{equation}\label{eqn:density}
        \frac{1}{\rho} \frac{\mathrm{d} \rho}{\mathrm{d}t} = -\nabla \cdot \bs u,
    \end{equation} 
    where \( \bs u =\bs u(t,\bs x)\) denotes the fluid velocity and \( \rho=\rho(t,\bs x) \) is the density. 
    
    Newton's second law gives us 
    \begin{equation}
        \rho \frac{\mathrm{d} \bs u}{\mathrm{d}t} = \nabla \cdot \bs \sigma.
    \end{equation}
    From the First Law of Thermodynamics, we obtain the governing equation for the specific internal energy,
    \( e =e(t, \bs x)\), which is given by
    \begin{equation}
        \rho \frac{\mathrm{d} e}{\mathrm{d}t} =  \bs \sigma :\nabla \bs u.
    \end{equation}
    The above equations are complemented by an equation of motion
    \begin{equation}
        \frac{\mathrm{d} \bs x}{\mathrm{d}t} = \bs u.
    \end{equation}
    
    We assume that we are dealing with an ideal gas so that \( e = c_v T \), with \( c_v \) the (constant) specific heat capacity and the pressure is related to the internal energy by the ideal gas equation of state (EoS), $p = (\gamma-1)\rho c_v T$, with $\gamma$ the adiabatic constant.

To summarize, we have the following well-known system of equations in Lagrangian coordinates
\begin{subequations}\label{eqn:Euler}
\begin{align}
    \frac{\mathrm d \rho}{\mathrm d t} &= -\rho\nabla\cdot \bs{u}\label{eqn:mass}\\
    \rho \frac{\mathrm d \bs{u}}{\mathrm d t} &= \nabla \cdot \bs\sigma,\label{eqn:momentum}  \\
    \rho \frac{\mathrm d e}{\mathrm d t} &=  \bs \sigma:\nabla \bs{u},\label{eqn:internal_energy}  \\
    \frac{\mathrm d \bs x}{\mathrm d t} &= \bs u.\label{eqn:motion}
\end{align}
\end{subequations}

In the next section, we describe the finite element spatial discretization for the above system of equations.

\section{Finite Element Discretization}\label{sec:FE}
In this section we describe the finite element discretization used for the spatial approximation.  The reader familiar with the continuous Galerkin discretization described in Dobrev et al. 2012 \cite{Dobrev.2012} may skip to the next subsection on the artificial viscosity.

\subsection{Notation, Mesh Discretization and Spatial Operators} Let us define a sequence of spatial configurations $(\Omega_t)_{t\ge 0}$ with $\Omega_t \subset\mathbb R^d$ and $d\in\{1,2,3\}$ the number of spatial dimensions.   
Let $\mathcal K_t$ be a conforming triangulation of $\Omega_t$ such that $\Omega_{t} = \bigcup_{c\in I^\mathcal{K}} K^c_t$, where $I^{\mathcal{K}}=\{1,\ldots,\#\mathcal{K}_t\}$ is the index set labeling the element $K^c_t\in\mathcal{K}_t$.  Further, we 
assume $\partial\Omega_{0}$ has straight edges or faces for $d=2$ or $3$, respectively.  Let $\hat{K}_{\mathcal{V}}$ denote the reference element for the kinematic quantities and $\hat{K}_{\mathcal{E}}$ be the reference element for the thermodynamic quantities approximated with finite elements.  We assume that $\hat K^{\mathcal{V}}$ and $\hat K^{\mathcal{E}}$ are both either the unit segment, a quadrilateral, a triangle, a tetrahedron or a hexahedron.  Let ${\rm q}\geq 1$ be an integer and denote by 
$\mathcal P_{\rm q}(\hat K)$ the space of polynomials of degree at most ${\rm q}$ on the kinematic reference element so that $\mathcal P_{\rm q-1}(\hat K)$ is the space of polynomials of degree at most ${\rm q-1}$ on the thermodynamic reference element with degrees of freedom indexed by $\hat{I}^{\mathcal{V}}=\{1,\ldots, \hat{N}^{\mathcal{V}} \}$ and $\hat{I}^{\mathcal{E}}=\{1,\ldots, \hat{N}^{\mathcal{E}} \}$, respectively.  Note that $\hat K^{\mathcal{V}}$ and $\hat K^{\mathcal{E}}$ are the same geometric object differing only in how their finite element degrees of freedom are defined, thus there is no ambiguity when writing $\hat{\bs{x}}$ to denote a location in either of the reference elements.  Let $\hat{I}^Q=\{1,\ldots,\hat{N}^Q\}$ be an index set for the quadrature points, $\{\hat{\bs{x}}_Q\}_{Q\in \hat{I}^Q}$, in both $\hat K^{\mathcal{V}}$ and $\hat K^{\mathcal{E}}$, defined by a Gauss-Legendre rule of the same order in each reference element (e.g. $(2\rm q)^d$ for tensor-product reference elements). 
Then, let $\{\varphi_i(\hat{\bs{x}})\}_{i\in \hat I^{\mathcal{V}}}$ denote the basis for $\mathcal P_{\rm q}(\hat K)$  on $\hat{K}_{\mathcal{V}}$ and 
$\{\psi_i(\hat{\bs{x}})\}_{i\in \hat I^{\mathcal{E}}}$ the basis for $\mathcal{P}_{\rm q - 1}(\hat K)$ on $\hat{K}_{\mathcal{E}}$, i.e. the shape functions for the reference elements.  On tensor-product elements, we use Lagrange basis functions and on simplices we use Bernstein basis functions.

Now, let $I^\mathcal{V}=\{1,\ldots,N^\mathcal{V}\}$ 
and $I^\mathcal{E} = \{1,\ldots,N^\mathcal{E}\}$ denote global index sets for the degrees of freedom of the kinematic and thermodynamic objects, respectively, approximated by finite elements.  Let $\hat{I}^{\mathcal{V}}_K$ and $\hat{I}^{\mathcal{E}}_K$ denote the per-element degrees of freedom on element $K^c_t$, with $t$ fixed. If each element $K^c_t$ carries $\#\hat I^{\mathcal{V}}$ kinematic and $\#\hat I^{\mathcal{E}}$ thermodynamic degrees of freedom, then the reference element and element local index sets are identical.  The kinematic degrees of freedom follow a Gauss-Lobatto-Legendre (GLL) distribution and the thermodynamic degrees of freedom follow a Gauss-Legendre (GL) distribution. We emphasize that this does not mean we employ a reduced quadrature. For example, on a tensor-product grid we have $(\rm q + 1)^d$ GLL kinematic degrees of freedom, $\rm q^d$ GL thermodynamic degrees of freedom and $(2\rm q)^d$ GL quadrature points.  Thus, the global kinematic degrees of freedom are continuous across the mesh and the thermodynamic degrees of freedom are not shared across elements.  We further define local-to-global maps between the index sets, which are given by $\iota_{\mathcal{V}}:I^{\mathcal{V}}_K\times I^{\mathcal{K}}\to I^{\mathcal{V}}$, $\iota_{\mathcal{E}}:I^{\mathcal{E}}_K\times I^{\mathcal{K}}\to I^{\mathcal{E}}$, respectively. 

We will make use of the per-element diffeomorphism $\Pi^{c}_t(\hat{\bs x}\,):\hat K^{\mathcal{V}} \to K^c_t$, which for each fixed $t\geq 0$ maps the
reference coordinates $\hat{\bs x} \in \hat K^{\mathcal{V}}$ (and therefore $\hat K^{\mathcal{E}}$) onto the physical element $K^c_t$ for all $c\in I^{\mathcal{K}}$. Let $\{\bs{x}_i(t)\}_{i\in I^{\mathcal{V}}}$ denote nodes in the case Lagrange basis functions are used for $\{\varphi_{i}\}_{i\in\hat{I}^{\mathcal{V}}}$ or control points if the Bernstein basis is used for $\{\varphi_{i}\}_{i\in\hat{I}^{\mathcal{V}}}$ defined on the mesh at time $t$.  Then, $\Pi^{c}_t(\hat{\bs x}\,)$ is defined by $$ \Pi^{c}_t(\hat{\bs x}\,)=\sum_{i\in I^{\mathcal{V}}_K } \varphi_{i}(\hat{\bs{x}})\bs x_{\iota_{\mathcal{V}}(i,c)}(t), $$
so that the fluid position in element $K_t^c$ is $\bs x(t,\hat{\bs x}) = \Pi^c_t(\hat{\bs x})$ and $\bs u(t,\bs{x}(t,\hat{\bs x})) =\partial_t \Pi^c_t(\hat{\bs x})$ represents the fluid velocity with $\bs{u}_i(t) = \partial_t \bs{x}_i(t)$ for all $i\in I^{\mathcal{V}}$.  Similarly, the internal energy in an element $K^c_t$ is represented by $e(t, \bs{x}(t, \hat{\bs x})) = \sum_{i\in I^{\mathcal{E}}_K}\psi_i(\hat{\bs x})e_{\iota_{\mathcal{E}}(i,c)}(t)$, where $\{e_i(t)\}_{i\in I^{\mathcal{E}}}$ are the values of the internal energy at the degrees of freedom in the elements of $\Omega_t$ or the control points, depending on whether a Lagrange or Bernstein basis is used.  In what follows, we write $\Pi^c_t$ for $\Pi^c_t(\hat{\bs x})$ to simplify the notation.
  
We will make use of the finite dimensional subspaces
\begin{equation}\label{eqn:kinematic_subspace}
    \begin{aligned}
        \mathcal V_h =\Big\{ \bs u \in L^2(\Omega_t)^d\,\Big|\, \bs u|_{K^c_t}\circ(\Pi^c_t)^{-1}
        \in \mathcal P_{\rm q}(\hat K)^d, \,\forall K^c_t \in\mathcal K_t \Big\}\cap C^0(\Omega_t)\subset \Big[H^1(\Omega_{t})\Big]^d,
    \end{aligned}
\end{equation}
for the kinematic space and 
\begin{equation}\label{eqn:thermodynamic_subspace}
    \begin{aligned}
        \mathcal E_h =\Big\{ e \in L^2(\Omega_t)\, \Big|\, e|_{K^c_t}\circ(\Pi_t^c)^{-1}
        \in\mathcal P_{\rm q-1}(\hat K), \,\forall K_t^c\in\mathcal K_t \Big\}\subset L^2(\Omega_{t}),
    \end{aligned}
\end{equation}
for the thermodynamic space.

The Jacobian at time $t$ and coordinate ${\bs x}$ is the $d\times d$ matrix defined by 
$$J(t,\bs{x}(t,\hat{\bs x})) = \nabla_{\hat{\bs x}}\Pi_t^c.$$ 

To approximate equation \eqref{eqn:mass}, we use a point-wise mass conservation as in \cite{Dobrev.2012}
\begin{equation}\label{eqn:pw_mass}
    \rho(t,\bs x(t,\hat{\bs x})) \det J(t,\bs{x}(t,\hat{\bs x})) = \rho(0, \bs x(0,\hat{\bs x})) \det J(0,\bs{x}(0,\hat{\bs x})),
\end{equation}
which follows from the Reynolds transport theorem.  Therefore, the density is not in the discrete spaces $\mathcal V_h$ or $\mathcal E_h$ but rather
a function defined on the quadrature points, $\bs x_q(t,\hat{\bs x})$, in the $K^c_t\in\mathcal{K}_t$ at time $t$ fixed.  The sound speed $c_s(t,\bs{x}(t,\hat{\bs x}))=\sqrt{\gamma\frac{p(t,\bs{x}(\hat{\bs x}))}{\rho(t,\bs{x}(\hat{\bs x}))}}$ and pressure $p(t,\bs{x}(\hat{\bs x}))$ are also defined to be functions on the quadrature points of the elements rather than in any finite element space 
and evaluated via the EoS using the above density defined in \eqref{eqn:pw_mass} and interpolation of the internal energy at the desired spatial coordinate. 
Placing all the thermodynamic quantities, except for the internal energy, outside of the finite element space obviates the need for hourglass controllers as demonstrated 
in \cite{ellis2010.thesis}.

Now, let us define our spatial operators.  All computations are performed on the reference element, $\hat K$ throughout. The mass matrices are denoted by $\mathcal M_{\mathcal{V}}$ and $\mathcal M_{\mathcal{E}}$ for the kinematic and thermodynamic spaces, respectively.  Let $i^\prime,j^\prime\in I^{\mathcal{V}}_K$, $i,j\in I^{\mathcal{V}}$ such that $(i,j) = (\iota_{\mathcal{V}}(i^\prime,c),\;\iota_{\mathcal{V}}(j^\prime,c))$ for each $c \in I^K$ with $K^c_t\ni (i,j)$, i.e. $c$ indexes the cells containing $i$ and $j$.  Then, the $i,j$ entry of $\mathcal{M}_{\mathcal{V}}$ is 
\begin{equation*}
    \big(\mathcal{M}_{\mathcal{V}}\big)_{i,j}= \sum_{K^c_t\ni(i,j)}\sum_{q\in I^Q}\omega_q \rho(0,\bs{x}_q(0,\hat{\bs{x}}) )\det J(0,\bs{x}_q(0,\hat{\bs{x}}))\varphi_{i^\prime}(\hat{\bs{x}}_q) \varphi_{j^\prime}(\hat{\bs{x}}_q),
\end{equation*}
where the $\{ \omega_q \}_{q\in I^Q}$ are the Gauss-Legendre quadrature weights.  For the thermodynamic mass matrix, for $i=\iota_{\mathcal{E}}(i^\prime,c)$ and $j=\iota_{\mathcal{E}}(j^\prime,c)$ with $i,j \in I^\mathcal{E}$ and $i^\prime, j^\prime \in I^\mathcal{E}_K$, we have \begin{equation*}
    \big(\mathcal{M}_{\mathcal{E}}\big)_{i,j} = \sum_{q\in I^Q}\omega_q \rho(0,\bs{x}_q(0,\hat{\bs{x}}) )\det J(0,\bs{x}_q(0,\hat{\bs{x}}))\psi_{i^\prime}(\hat{\bs{x}}_q) \psi_{j^\prime}(\hat{\bs{x}}_q),
\end{equation*}

We have used the relationship defined in equation \eqref{eqn:pw_mass} to eliminate time-dependence of the density in the mass matrix computation \cite{Dobrev.2012}. This, together with the fact that the basis functions $\{\varphi_i(\hat{\bs x})\}_{i\in I^{\mathcal{V}}_K},\; \{\psi_i(\hat{\bs x})\}_{i\in I^{\mathcal{E}}_K}$ are constant in time, makes the mass matrices time independent and thus, they are assembled once at the beginning of the simulation.  

The row-sum lumped mass matrices are $\mathcal D_{\mathcal{V}} = {\rm{diag}}(\mathcal M_{\mathcal{V}}\mathds{1}_{\mathcal V})$ and 
$\mathcal D_{\mathcal{E}} = {\rm{diag}}(\mathcal M_{\mathcal{E}}\mathds{1}_{\mathcal E})$, where $\mathds{1}_{\mathcal{V}}$ and $\mathds{1}_{\mathcal{E}}$ are vectors of all ones with lengths
$\#I^{\mathcal{V}}$ and $\#I^{\mathcal{E}}$, respectively.  It is important that the row-sum lumped mass matrices do not produce a degenerate lumped mass approximation. 
Thus, we use Lagrange polynomials with Gauss-Lobatto-Legendre nodes on tensor product elements of any order and linear Lagrange elements or arbitrary order Bernstein elements on simplices, all of which avoid degenerate lumped masses.

Now, let $i=\iota_{\mathcal{V}}(i^\prime,c)$ for $i\in I^{\mathcal{V}}$, $i^\prime \in I^{\mathcal{V}}_{K}$ and $j=\iota_{\mathcal{E}}(j^\prime,c)$ for $j\in I^{\mathcal{E}}$, $j^\prime \in I^{\mathcal{E}}_{K}$ for $K^c_t\in \mathcal K_t$.  Then, the weak form of the internal energy right-hand side in equation \eqref{eqn:internal_energy} gives rise to the force tensor \cite{Dobrev.2012}, whose entry for the degree-of-freedom pair $(i,j)\in I^{\mathcal{V}}\times I^{\mathcal{E}}$ at dimension $r$ is given by
\begin{equation}
    F_{i,j,r} = \sum_{K^c_t\ni i}\sum_{q\in I^Q}  \bs\sigma_{r,\ell}(t,\bs{x}_q(t,\hat{\bs x})) J^{-T}_{\ell,s}(t,\bs{x}_q(t,\hat{\bs x})) \partial_{\hat{\bs x}_s} \varphi_{i^\prime}(\hat{\bs x}_q) \psi_{j^\prime}(\hat{\bs x}_q) \omega_q \det J(t,\bs{x}_q(\hat{\bs x})),
\end{equation}
where $J^{-T}(t,\bs{x}(t,\hat{\bs x}))$ is the inverse transpose of the Jacobian and the matching $\ell$ and $s$ indices imply summation over physical dimensions.  With the boundary conditions $\bs u \cdot \hat n \big|_{\partial\Omega} = 0$ and using the definition of $\bs\sigma(t, \bs x(t,\hat{\bs x}))$, we can write the right-hand side of the momentum equation \eqref{eqn:momentum} in matrix-vector form as $-F\mathds{1}_{\mathcal{E}}$ from the 
partition of unity property of the thermodynamic basis.

Thus, the principal spatial operators are defined. In the next subsection we describe the stabilization mechanism for the scheme.

\subsection{Artificial Viscosity}
To stabilize the scheme, we use a limited artificial viscosity, which allows for a high-order solution in smooth regions but effectively stabilizes the solution at a shock. The formulation follows closely the hyperviscosity approach detailed in \cite{Bello-Maldonado.2020},
with the caveat that we use a few terms of a Neumann series, where the number of iterations depends on the mesh, basis type and polynomial order, to approximate the polyharmonic operator. More specifically, we have
\begin{equation}\label{eqn:artificial_viscosity}
    \bs{\sigma}_a(t,\bs{x}(t,\hat{\bs{x}})) = \mu \nabla^s \bs{u}(t,\bs{x}(t,\hat{\bs{x}})),
\end{equation}
where the operator $\nabla^s$ denotes the symmetric gradient and $\mu$ is the limited viscosity coefficient which is a function of the quadrature points.  Here, $\mu = \min(\mu_{\rm{hyp}}, \mu_{\rm std})$ (using the notation found in \cite{Bello-Maldonado.2020}) where 
$$\mu_{\rm{std}}= \rho \ell(\bs{x}(t,\hat{\bs{x}})) \Big( \beta_{\rm{vort}}\beta_{\rm{comp}} c_s(t,\bs{x}(t,\hat{\bs{x}}))  + \beta_{\rm{quad}}\ell(\bs{x}(t,\hat{\bs{x}}))|\lambda| \Big),$$ 
is a common ``linear plus quadratic'' viscosity coefficient \cite{Campbell.2001, CARAMANA.2006, Lipnikov.2010, KOLEV20098336, CARAMANA.1998, Caramana_Art_Visc.1998}. The coefficient function $\beta_{\mathrm{vort}}$ is a measure of vorticity, $\beta_{\mathrm{comp}}$ is a measure of the compression \cite{Dobrev.2012} and $\beta_{\rm{quad}}$ is a user defined parameter acting on the quadratic term.  
The length scale is given $\ell(\bs x(t,\hat{\bs{x}})) = \ell_0 \frac{\|J^{-1}(0,\bs{x}(0,\hat{\bs{x}})) J(t,\bs{x}(t,\hat{\bs{x}})) \cdot \bs s\|}{\| \bs s\|}$, with the initial length scale divided by the kinematic polynomial order \cite{Dobrev.2012, Guermond.2016.EntropyViscosity}. For example, on a tensor-product grid one has $\ell_0 = \frac{1}{\rm q}| K^c_t |^{1/d}$ for an element $K^c_t\in \mathcal{K}_t$ in the initial configuration.
The pair $(\lambda, \bs s)$ is the minimum spectral data pair of $\nabla^s\bs u(t,\bs{x}(t,\hat{\bs{x}}))$.  If the initial configuration does not guarantee that all elements have the same volume with straight edges, see \cite{Guermond.2016.EntropyViscosity} for an alternative definition of the initial length scale.  See \cite{Guermond.2016.EntropyViscosity} and \cite{Dobrev.2012} for a detailed discussion on 
alternative choices for the length scale and the consequences of getting this wrong. We use
$$\mu_{\rm{hyp}}=\beta_{\rm{hyp}}\rho(t,\bs{x}(t,\hat{\bs{x}}))\ell(\bs{x}(t,\hat{\bs{x}}))^{2\mathrm z}{\Bigg|\Delta^{\mathrm z} \bigg(\ell_0^2 \frac{\det{J(t,\bs{x}(t,\hat{\bs{x}}))}}{\det{J(0,\bs{x}(0,\hat{\bs{x}}))}} \sqrt{\nabla^s\bs{u}:\nabla^s\bs{u}} \bigg) \Bigg|}, \quad \mathrm z = 1,2,\ldots$$
as the hyperviscosity limiter, where $\beta_{\rm{hyp}}$ is another user parameter and the $|\cdot|$ operator is understood to be subject to a smoothing operation as described in \cite{Bello-Maldonado.2020}.  In this work, we use the $\mathrm z = 1$ case exclusively.  In \cite{Bello-Maldonado.2020}, the approximate Laplacian is $\mathcal{M}^{-1}S \approx \Delta$, where $S$ is a finite element diffusion operator and $\mathcal{M}$ is a kinematic mass matrix with density identically one. Since the diffusion equation does not suffer from the same dispersion error as hyperbolic equations do (outlined in \cite{GUERMOND.dispersionError}), it is reasonable to use collocated mass lumping on tensor product meshes to obtain a high-order approximation to  $\mathcal{M}^{-1}S$ or to use a few iterations of a Neumann series on simplex meshes. The Neumann series approximation to $\mathcal{M}^{-1}$, with $\mathcal{D}$ its row-sum lumped mass, is given by
$$  
    \mathcal{M}^{-1} \approx \sum_{\rm r = 0}^{\rm s}(I-\mathcal{D}^{-1}\mathcal{M})^{\rm r}\mathcal{D}^{-1},
$$
where $\rm s=0$ corresponds to an application of the lumped mass matrix and in the limit as $\rm s \rightarrow \infty$ we have equality \cite{GUERMOND.dispersionError}.  Then, our approximation for the finite element Laplace operator is 
$$
    \Delta \approx\sum_{\rm r = 0}^{\rm s}(I-\mathcal{D}^{-1}\mathcal{M})^{\rm r}\mathcal{D}^{-1}S,
$$
for the high-order limiting coefficient.  For the numerical tests in Section \ref{sec:Tests}, a single iteration of the above approximation was enough for our purposes. 

\subsection{The Semi-Discrete System}
 With the previously defined operators in hand, we may finally write the semi-discrete equations in matrix-vector form using the Picard integral formulation
\begin{subequations}\label{eqn:EulerSemiDisc}
\begin{align}
    \mathcal{M}_{\mathcal{V}}\big(\bs{u}(t)-\bs{u}(t_0)\big) &= -\int_{t_0}^{t}F(s)\mathds{1}_{\mathcal E}\mathrm d s,\label{eqn:FEmomentum}  \\
    \mathcal{M}_{\mathcal{E}}\big(e(t)- e(t_0)\big) &=  \int_{t_0}^{t}F(s)^T \bs{u}(s)\mathrm d s,\label{eqn:FEinternal_energy}  \\
    \bs x(t) - \bs{x}(t_0) &= \int_{t_0}^{t}\bs u(s)\mathrm d s.\label{eqn:FEmotion}
\end{align}
\end{subequations}
These equations \eqref{eqn:EulerSemiDisc} together with the pointwise mass conservation \eqref{eqn:pw_mass}, complete the spatial discretization.  We now move on to the description of Deferred Correction time integration to arrive at the fully discrete system.

\section{Deferred Correction}\label{sec:DeC}
The Deferred Correction method was introduced by Fox in 1947 \cite{Fox.1947} and later extended 
by Pereyra in 1966 \cite{Pereyra.1966}. For a detailed account of the history and classification
 of Deferred Correction schemes, see \cite{Ong.Spiteri.2020}. The Spectral Deferred Correction 
 method was introduced by Dutt, Greengard, and Rokhlin \cite{Dutt.2000}. In this formulation, 
 the ordinary differential equation is recast in integral form, and the resulting Picard integral 
 equation is approximated using a spectral quadrature rule. The correction process may therefore 
 be viewed as an iterative Picard-type refinement: each sweep improves the approximation by reducing 
 the defect in the discretized integral equation. Spectral Deferred Correction was extended to the 
 semi-implicit setting by Minion in 2003 \cite{Minion.2003}, and its application to splitting methods
  was investigated by Hagstrom and Zhou in 2006 \cite{Hagstrom.2006}.

Abgrall extended the above ideas to hyperbolic partial differential equations, providing a formalism which allowed for 
continuous finite element discretizations but avoiding mass matrix inversion \cite{Abgrall.2017}.  In the last decade, these
ideas have been successfully extended and applied in various settings and we refer the reader to the works \cite{Abgrall.2019, micalizziThesis, MicalizziTorlo2024, Michel.2021, Michel.2023}
and the references therein.

Let us describe the Deferred Correction methodology following \cite{Abgrall.2017}.  For any time interval $I_n=[t_n,t_{n+1}]$, we have $\rm{M}$ sub-time intervals defined by $\rm{M}+1$ sub-time nodes
$$
t_n =t_{0,n} < t_{1,n} < \ldots < t_{\rm m,n} < \ldots < t_{\rm M-1,n} < t_{\rm M,n} = t_{n+1},
$$
so that $\Delta t_{\rm m,n} = \xi_{\rm m,n} \Delta t$ with $0 = \xi_0 < \xi_1 < \ldots < \xi_{\rm{M}} = 1$ and $\Delta t = t_{n+1}-t_n$ is the increment length.  Throughout, we use $\rm m\in\{ 0, 1, \ldots ,\rm M\}$ to index the sub-time nodes for a given DeC discretization.

Now, to illustrate the idea, let us assume that we have discretized a scalar hyperbolic balance law using
the continuous Galerkin method and have the following semi-discrete matrix equation in Picard integral form
\begin{equation}\label{eqn:integral_eqn}
    \mathcal{M}\big(u(t_{n+1}) - u(t_n)\big) = \int_{t_n}^{t_{n+1}}f(u(t))\mathrm d t,
\end{equation}
where $\mathcal M\in \mathbb{R}^{\rm N \times \rm N}$ is the global, sparse mass matrix and $f(u(t))\in \mathbb{R}^{\rm N}$ is the resulting vector obtained from the Galerkin discretization which depends linearly or nonlinearly
on $u(t)$.  Here the integer $\rm N>0$ is the number of degrees of freedom obtained from the Galerkin discretization. We assume that $f$ is Lipschitz in $u$.  The iterations (also referred to as corrections) are indexed by $\rm k \in\{0,1,\ldots,\rm K\}$, so that
the solution at the $\rm m$-th sub-time node and $\rm k$-th correction in interval $I_n$ is denoted by $u^{(\rm k)}_{\rm m,n}$. Here, we take the zeroth iterate across $I_n$ to be initially constant and given by $u_n\approx u(t_n)$ so that
$u^{(0)}_{\rm m,n} = u_n$ for $1 \leq \rm m\leq \rm M$ and we do not correct the solution at sub-time node $t_{0,n}$ so that $u^{(\rm k)}_{0,n}=u_n$ for $1 \leq \rm k\leq \rm K$.  We note that other choices are possible, for example the zeroth iterate could be obtained from a cheap forward Euler solve at each sub-time node.  

A direct application of SDC would require inversion of the mass matrix to form an ordinary differential equation (ODE) for the error and correcting the error via the SDC iteration until we arrive
at $u_{n+1} = u^{(\rm K)}_{\rm M,n}$, where $ u_{n+1} \approx u(t_{n+1})$ is the solution at the last sub-time node in the interval $I_n$ (see \cite{Ong.Spiteri.2020}).  Instead of inverting $\mathcal M$, we can define the operator
\begin{equation}\label{eqn:L2}
    \mathcal L_2 u^{(\rm k)}_{\rm m,n} = \mathcal M(u^{(\rm k)}_{\rm m,n}-u_n) - \Delta t \sum_{\ell=0}^{\rm M} \Theta_{\rm m,\ell} f(u^{(\rm k)}_{\ell,n}),
\end{equation}
so that $\mathcal L_2 \big(u^*_{\rm m, n}\big) = \boldsymbol{0}$ is the finite element solution to equation \eqref{eqn:integral_eqn} above at time node $t_{\rm m,n}$ in interval $I_n$.    Here, we use a Gauss-Lobatto quadrature to define the sub-time node distribution in the interval $I_n$, but other choices are possible \cite{Abgrall.2019, Abgrall.2017}.  The matrix $\Theta$ is zero in the first row and contains the quadrature weights in each subsequent row to integrate the sub-time interval from $t_n$ to $t_{\rm m,n}$ for each $\rm m$.  However, directly solving for $u^*_{\rm m, n}$ for each $\rm m$ in \eqref{eqn:L2} does not provide much in the 
way of computational savings, and in fact amounts to the implicit Lobatto IIIA method \cite{hairer2006geometric}.  As an example, consider three sub-time nodes. Then, we have $\rm M = 2$ and with $t_{0,n} = 0$, $t_{1,n} = \frac12$ and $t_{\rm 2,n} = 1$ and the integration weight matrix
\[
    \Theta =
    \begin{pmatrix}
    0 & 0 & 0 \\
    \frac{5}{24} & \frac{1}{3} & -\frac{1}{24} \\
    \frac{1}{6} & \frac{2}{3} & \frac{1}{6}
    \end{pmatrix},
\]
corresponding to the 4th order Lobatto IIIA integrator \cite{hairer2006geometric}.  
To obtain an explicit method without inverting $\mathcal{M}$, the key idea is to define an operator $\mathcal L_1$ that is close enough to $\mathcal L_2$ which is
also cheap to invert. To this end, we have the following lemma due to Abgrall \cite{Abgrall.2017}.
\begin{lemma}\label{lemma:DeC}
        If two operators $\mathcal L_1,$ $\mathcal{L}_2$ depending on a parameter $\delta$ satisfy the following assumptions
        \begin{enumerate}
            \item There exists a unique solution $U^*$ to $\mathcal {L}_2$ such that $\mathcal L_2 U^* = \boldsymbol 0$;
            \item There exists a constant $C_0>0$ independent of $\delta$ such that for any $U$ and $V$, $\mathcal L_1$ satisfies the coercivity property
            $$C_0 \| U - V\| \leq \| \mathcal{L}_1 U - \mathcal L_1 V  \|; $$
            \item There exists a constant $C_1 \in (0, C_0)$ independent of $\delta$ such that for any $U$, $V$ the operator difference $\mathcal L_1 - \mathcal L_2$ satisfies the Lipschitz condition
            $$
            \| \big(\mathcal L_1 U - \mathcal L_2 U \big) - \big(\mathcal L_1 V - \mathcal L_2 V \big)\| \leq C_1\delta \| U- V\|;
            $$
        \end{enumerate}
        then if $\frac{C_1\delta}{C_0} < 1$ the iteration
        $\mathcal{L}_1(U^{(\rm k+1)}) = \mathcal L_1 U^{(\rm k)}  - \mathcal L_2 U^{(\rm k)} $ is convergent and the error after $\rm K$ corrections is smaller than $\big( \frac{C_1\delta}{C_0} \big)^{\rm K}\| U^{(0)} - U^*\|$.
    \end{lemma}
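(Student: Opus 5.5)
The plan is the standard contraction argument. Since $U^*$ satisfies $\mathcal L_2 U^* = \boldsymbol 0$ by assumption 1, it is a fixed point of the iteration:
\[
\mathcal L_1 U^* = \mathcal L_1 U^* - \mathcal L_2 U^*.
\]
Subtracting this identity from the iteration $\mathcal L_1 U^{(\rm k+1)} = \mathcal L_1 U^{(\rm k)} - \mathcal L_2 U^{(\rm k)}$ gives
\[
\mathcal L_1 U^{(\rm k+1)} - \mathcal L_1 U^* = \big(\mathcal L_1 U^{(\rm k)} - \mathcal L_2 U^{(\rm k)}\big) - \big(\mathcal L_1 U^* - \mathcal L_2 U^*\big).
\]

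The next step bounds both sides of this error equation. On the left, the coercivity in assumption 2 with $U = U^{(\rm k+1)}$ and $V = U^*$ gives a lower bound of $C_0\|U^{(\rm k+1)} - U^*\|$. On the right, the Lipschitz bound in assumption 3 on the operator difference $\mathcal L_1 - \mathcal L_2$ gives an upper bound of $C_1\delta\|U^{(\rm k)} - U^*\|$. Combining the two yields the one-step contraction
\[
\|U^{(\rm k+1)} - U^*\| \le \frac{C_1\delta}{C_0}\,\|U^{(\rm k)} - U^*\|.
\]
A trivial induction on $\rm k$ then gives $\|U^{(\rm K)} - U^*\| \le \big(\frac{C_1\delta}{C_0}\big)^{\rm K}\|U^{(0)} - U^*\|$. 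Convergence to $U^*$ as $\rm K\to\infty$ follows whenever $\frac{C_1\delta}{C_0} < 1$.

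The only delicate point, and the one I expect to be the main obstacle, is showing that the iteration is well defined. That is, $U^{(\rm k+1)}$ must exist and be unique given $U^{(\rm k)}$, so that $\mathcal L_1 W = g$ is solvable for the right-hand side $g = \mathcal L_1 U^{(\rm k)} - \mathcal L_2 U^{(\rm k)}$.
\begin{itemize}
\item Uniqueness follows immediately from coercivity, since $\mathcal L_1$ is injective.
\item For existence, I would first note that in the intended setting $\mathcal L_1$ is explicitly invertible. In the DeC application it is built from the lumped mass matrix, which is diagonal, with $f$ evaluated at known data.
\item For the abstract statement, I would add the standing assumption that $\mathcal L_1$ is invertible; this is implicit in calling the scheme explicit. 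Alternatively, in finite dimensions, if $\mathcal L_1$ is continuous then coercivity together with invariance of domain gives surjectivity.
\end{itemize}
With well-posedness settled, the contraction estimate above completes the argument.
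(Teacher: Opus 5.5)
Your proposal is correct and follows essentially the same route as the paper, which gives no argument of its own but defers to Abgrall's lemma. That lemma's proof is exactly your chain: subtract the fixed-point identity for $U^*$, bound the left side below by coercivity and the right side above by the Lipschitz condition on $\mathcal L_1-\mathcal L_2$, then induct. Your extra remark on solvability of $\mathcal L_1 W = g$ is sound (via injectivity from coercivity plus either explicit invertibility or continuity and invariance of domain); it is a point the lemma leaves implicit and does not change the argument.
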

\begin{proof}
  The details on the choice of norm and proofs are given in \cite{Abgrall.2017}.
\end{proof}
We take our low-order operator to be,
\begin{equation}\label{eqn:L1}
    \mathcal L_1 u^{(\rm k)}_{\rm m ,n} = \mathcal{D}(u^{(\rm k)}_{\rm m,n}-u_n) - \Delta t_{\rm m , n} f(u_n),
\end{equation}
which satisfies the conditions of the above lemma.  Then, the usual DeC iteration from \cite{Abgrall.2017} is 
\begin{equation}\label{eqn:AbgrallDeC}
  \mathcal L_1 u^{(\rm k+1)}_{\rm m,n} = \mathcal L_1 u^{(\rm k)}_{\rm m,n} - \mathcal L_2 u^{(\rm k)}_{\rm m,n}.
\end{equation}
The iteration \eqref{eqn:AbgrallDeC} has been used successfully in many contexts (\cite{lohmann2019, kuzmin_hajduk, micalizziThesis, MicalizziTorlo2024, Abgrall.2019, Abgrall.2022} to name a few) and Lemma \ref{lemma:DeC} provides an estimate on the number of corrections needed to obtain a given error, where $\delta \sim \mathcal{O}(\Delta t) + \mathcal{O}( h)$ in Lemma \ref{lemma:DeC} above derived in \cite{Abgrall.2017} using an ${H}^{-1}$ dual norm to obtain the estimates for the operators $\mathcal{L}_1,\;\mathcal{L}_2$ acting on functions in a subspace of ${H}^1$.

We remark that the Lemma \ref{lemma:DeC} above is presented as it is usually found in the literature (see \cite{Abgrall.2017,Abgrall.2019} for example), with the vectors $U,V,U^*$ being vectors over the set of sub-time nodes for a fixed spatial degree of freedom.  
In the present work, it is convenient to consider how the iteration \eqref{eqn:AbgrallDeC} using the $\mathcal{L}_1$, $\mathcal{L}_2$ operators modify a state vector of finite element spatial degrees of freedom at fixed time nodes, $u^{(k)}_{m,n}$, as in equations \eqref{eqn:L1} and \eqref{eqn:L2} above.  This perspective is more appropriate when considering the global finite element operators, as will be seen below.

We have the following useful proposition which shows that the above iteration is equivalent to a simpler closed formula.
\begin{proposition}
  The DeC update \eqref{eqn:AbgrallDeC} is equivalent to the following iteration
  \begin{equation}\label{eqn:NeumannDeC}
    u^{(\rm k+1)}_{\rm m,n} = u_n + \Delta t \sum_{\ell=0}^{\rm M} \Theta_{\rm m,\ell}\sum_{\rm p=0}^{\rm k} A^{\rm p}\mathcal{D}^{-1} f(u^{(\rm k-\rm p)}_{\ell,n}),
  \end{equation}
  where $A = ( I - \mathcal{D}^{-1}\mathcal{M})$ and $I$ is the identity matrix that is compatible with the mass matrix.
\end{proposition}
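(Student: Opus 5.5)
The plan is to rewrite the DeC update \eqref{eqn:AbgrallDeC} as an affine recursion for the increment $w^{(\rm k)}_{\rm m,n} := u^{(\rm k)}_{\rm m,n} - u_n$, and then unroll that recursion by induction on $\rm k$. The closed formula \eqref{eqn:NeumannDeC} should appear as a discrete variation-of-constants (Duhamel) sum, with $A = I - \mathcal{D}^{-1}\mathcal{M}$ as the propagator.

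First, substitute the definitions \eqref{eqn:L1} and \eqref{eqn:L2} into \eqref{eqn:AbgrallDeC}. The term $-\Delta t_{\rm m,n} f(u_n)$ occurs in both $\mathcal L_1 u^{(\rm k+1)}_{\rm m,n}$ and $\mathcal L_1 u^{(\rm k)}_{\rm m,n}$, so it cancels. What remains is
\begin{equation*}
\mathcal{D}\, w^{(\rm k+1)}_{\rm m,n} = (\mathcal{D}-\mathcal{M})\, w^{(\rm k)}_{\rm m,n} + \Delta t \sum_{\ell=0}^{\rm M}\Theta_{\rm m,\ell} f(u^{(\rm k)}_{\ell,n}).
\end{equation*}
The lumped mass $\mathcal{D}$ is diagonal and nondegenerate, as ensured by the choice of bases in Section \ref{sec:FE}. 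Multiplying by $\mathcal{D}^{-1}$ gives
\begin{equation*}
w^{(\rm k+1)}_{\rm m,n} = A\, w^{(\rm k)}_{\rm m,n} + \Delta t\, \mathcal{D}^{-1}\sum_{\ell=0}^{\rm M}\Theta_{\rm m,\ell} f(u^{(\rm k)}_{\ell,n}).
\end{equation*}

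Next, note that the zeroth iterate is $u^{(0)}_{\rm m,n} = u_n$, so $w^{(0)}_{\rm m,n} = 0$. The claim to prove by induction on $\rm k$ is
\begin{equation*}
w^{(\rm k+1)}_{\rm m,n} = \Delta t\sum_{\rm p=0}^{\rm k} A^{\rm p}\mathcal{D}^{-1}\sum_{\ell=0}^{\rm M}\Theta_{\rm m,\ell} f(u^{(\rm k-p)}_{\ell,n}).
\end{equation*}
The base case $\rm k=0$ is the recursion with $w^{(0)}=0$. For the inductive step, apply $A$ to the formula for $w^{(\rm k)}$; this shifts $\rm p\mapsto \rm p+1$. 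Adding the new $\rm p=0$ term from the recursion then gives the formula for $w^{(\rm k+1)}$.

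Finally, observe that $A^{\rm p}\mathcal{D}^{-1}$ is a spatial matrix and the $\Theta_{\rm m,\ell}$ are scalars. The two therefore commute, and the $\ell$-sum can be moved outside, which yields exactly \eqref{eqn:NeumannDeC}. The equivalence runs in both directions, because every step above is reversible given the invertibility of $\mathcal{D}$.

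The only points needing care are consistency at the uncorrected node $\rm m=0$ and the bookkeeping of the iterate indices. At $\rm m=0$ the first row of $\Theta$ vanishes, so \eqref{eqn:NeumannDeC} returns $u_n$, in agreement with $u^{(\rm k)}_{0,n}=u_n$. For the index bookkeeping, $f$ is evaluated at $u^{(\rm k-p)}$, and the shift must be tracked correctly in the inductive step. I do not expect a genuine obstacle: the argument is purely algebraic and uses no properties of $f$.
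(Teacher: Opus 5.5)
Your proposal is correct and takes essentially the same route as the paper. Both argue by induction on $\rm k$, using the cancellation of the $\Delta t_{\rm m,n} f(u_n)$ terms and the one-step relation $u^{(\rm k+1)}_{\rm m,n}-u_n = A\,(u^{(\rm k)}_{\rm m,n}-u_n) + \Delta t\,\mathcal{D}^{-1}\sum_{\ell=0}^{\rm M}\Theta_{\rm m,\ell} f(u^{(\rm k)}_{\ell,n})$, which the paper uses less explicitly in its inductive step; your affine recursion for the increment $w^{(\rm k)}_{\rm m,n}$ is a tidier packaging that also shows the closed formula relies on $u^{(0)}_{\rm m,n}=u_n$ (otherwise an extra $A^{\rm k+1}w^{(0)}_{\rm m,n}$ term appears).
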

\begin{proof}
This follows from a simple argument by induction.  Using the assumption $u^{(0)}_{m,n} = u^{(\rm k)}_{0,n} = u_n$, for $\rm k, \rm m >0$ 
and observing that the $\Delta t_{\rm m,n} f(u_n)$ term always cancels when computing 
$\mathcal L_1 u^{(\rm k+1)}_{\rm m,n} - \mathcal L_1u^{(\rm k)}_{\rm m,n}$ from our choice of $\mathcal L_1$, the first two corrections are 
$$
  u^{(1)}_{\rm m,n} = u_n + \Delta t \sum_{\ell=0}^{\rm M} \Theta_{\rm m,\ell}\mathcal D^{-1} f(u^{(0)}_{\ell,n}),
$$
and 
$$
  u^{(2)}_{\rm m,n} = u^{(1)}_{\rm m,n} - \mathcal D^{-1}\Big(\mathcal M (u^{(1)}_{\rm m,n} - u_n) -  \Delta t \sum_{\ell=0}^{\rm M} \Theta_{\rm m,\ell}f(u^{(1)}_{\ell,n})\Big).
$$
Substituting the expression for $u^{(1)}_{\rm m,n}$ into the RHS above, we get 

$$
  u^{(2)}_{\rm m,n} = u_n + \Delta t \sum_{\ell=0}^{\rm M} \Theta_{\rm m,\ell}\mathcal D^{-1} f(u^{(0)}_{\ell,n}) - \Delta t \sum_{\ell=0}^{\rm M} \Theta_{\rm m,\ell}\mathcal D^{-1}\mathcal M \mathcal D^{-1} f(u^{(0)}_{\ell,n}) +  \Delta t \sum_{\ell=0}^{\rm M} \Theta_{\rm m,\ell}\mathcal D^{-1}f(u^{(1)}_{\ell,n}).
$$
Rearranging
\begin{equation}
  \begin{aligned}
  u^{(2)}_{\rm m,n} = u_n + \Delta t \sum_{\ell=0}^{\rm M} \Theta_{\rm m,\ell}(\mathcal D^{-1} - \mathcal D^{-1} \mathcal M \mathcal D^{-1}) f(u^{(0)}_{\ell,n}) +  \Delta t \sum_{\ell=0}^{\rm M} \Theta_{\rm m,\ell}\mathcal D^{-1}f(u^{(1)}_{\ell,n})\\
                = u_n + \Delta t \sum_{\ell=0}^{\rm M} \Theta_{\rm m,\ell}(I-\mathcal D^{-1}\mathcal M)\mathcal D^{-1}f(u^{(0)}_{\ell,n}) + \Delta t \sum_{\ell=0}^{\rm M} \Theta_{\rm m,\ell}\mathcal D^{-1}f(u^{(1)}_{\ell,n}) \\
                = u_n + \Delta t \sum_{\ell=0}^{\rm M} \Theta_{\rm m,\ell}\sum_{\rm p=0}^1 (I-\mathcal D^{-1}\mathcal M)^{\rm p} \mathcal D^{-1} f(u^{(1-\rm p)}_{\ell,n}).
  \end{aligned}
\end{equation}

We assume the formula holds for the $\rm k$th correction and compute the $(\rm k+1)$th correction
$$
 u^{(\rm k+1)}_{\rm m,n} = u^{(\rm k)}_{\rm m,n} - \mathcal D^{-1}\Big(\mathcal M (u^{(\rm k)}_{\rm m,n} - u_n) - \Delta t\sum_{\ell=0}^{\rm M}\Theta_{\rm m,\ell} f(u^{(\rm k)}_{\ell,n}) \Big), 
$$
then use the inductive hypothesis to get
\begin{equation*}
  \begin{aligned}
  u^{(\rm k+1)}_{\rm m,n} = u_n + \Delta t \sum_{\ell=0}^{\rm M} \Theta_{\rm m,\ell}\sum_{p=0}^{\rm k-1}(I-\mathcal D^{-1}\mathcal{M})^{\rm p}\mathcal D^{-1} f(u^{(\rm k-1-\rm p)}_{\ell,n})\\
   + \Delta t \sum_{\ell=0}^{\rm M} \Theta_{\rm m,\ell}\mathcal D^{-1}\Big(\mathcal M \Big(\sum_{\rm p=0}^{\rm k-1}(I-\mathcal D^{-1}\mathcal{M})^{\rm p}\mathcal D^{-1} f(u^{(\rm k-1-\rm p)}_{\ell,n})\Big) + f(u^{(\rm k)}_{\ell,n}) \Big)\\
                  = u_n + \Delta t \sum_{\ell=0}^{\rm M} \Theta_{\rm m,\ell} \Big(( I - \mathcal D^{-1}\mathcal M)\Big(\sum_{\rm p=0}^{\rm k-1}(I-\mathcal D^{-1}\mathcal{M})^{\rm p}\mathcal D^{-1} f(u^{(\rm k-1-\rm p)}_{\ell,n})\Big) + \mathcal D^{-1}f(u^{(\rm k)}_{\ell,n})\Big)\\
                  = u_n + \Delta t \sum_{\ell=0}^{\rm M} \Theta_{\rm m,\ell} \Big( \sum_{\rm p=1}^{\rm k}(I-\mathcal D^{-1}\mathcal{M})^{\rm p}\mathcal D^{-1} f(u^{(\rm k-\rm p)}_{\ell,n}) + \mathcal D^{-1}f(u^{(\rm k)}_{\ell,n})\Big) \\
                  = u_n + \Delta t \sum_{\ell=0}^{\rm M} \Theta_{\rm m,\ell} \sum_{\rm p=0}^{\rm k}(I-\mathcal D^{-1}\mathcal{M})^{\rm p}\mathcal D^{-1} f(u^{(\rm k-\rm p)}_{\ell,n}),
\end{aligned}
\end{equation*}
which completes the proof.
\end{proof}
   We see from \eqref{eqn:NeumannDeC} that the DeC iteration constructs a Neumann series approximation to the mass matrix inverse with the caveat that the terms in the Neumann series act on the right-hand side at different correction levels.
 The convergence of the Neumann series depends on the spectral radius of $A$. It has been observed that the DeC iteration may not converge using the theoretical number of corrections (see \cite{micalizziThesis} for example), especially for high order Bernstein polynomials for which $A$ has a spectral radius approaching one as the polynomial order is increased.  We would like to point out that this does not contradict the estimate in Lemma \ref{lemma:DeC}, but suggests perhaps a sharper bound exists which incorporates the dependence on $A$ (or equivalent). 
 
 The iteration \eqref{eqn:NeumannDeC} above suggests a path to accelerate the convergence through preconditioning the $\mathcal L_2$ operators to reduce the spectral radius of $A$ and speed-up convergence of the cG-DeC method.  More specifically, we conjecture that it is possible to define a different $\mathcal{L}_1$ operator which is invertible but preconditions $\mathcal{M}$ better than $\mathcal{D}^{-1}$ for a given polynomial basis when applied in the iteration \eqref{eqn:NeumannDeC}.  The development of such a preconditioning strategy is beyond the scope of the present article.
  
\subsection{Application to the Equations of Lagrangian Hydrodynamics}\label{sec:DeCLagHydro}

Applying the DeC iteration \eqref{eqn:NeumannDeC} to the semi-discrete equations \eqref{eqn:FEmomentum}, \eqref{eqn:FEinternal_energy} and \eqref{eqn:FEmotion} we arrive at the fully discrete system
\begin{subequations}\label{eqn:FullyDiscrete}
\begin{align}
    \boldsymbol u^{(\rm k+1)}_{\rm m,n} = \boldsymbol u_n - \Delta t \sum_{\ell=0}^{\rm M} \Theta_{\rm m,\ell}\sum_{p=0}^{\rm k} A_{\mathcal{V}}^{\rm p}\mathcal{D}_{\mathcal{V}}^{-1} F^{(\rm k-\rm p)}_{\ell,n}\mathds{1}_{\mathcal E},\label{eqn:DeCmomentum}\\
    e^{(\rm k+1)}_{\rm m,n} = e_n + \Delta t \sum_{\ell=0}^{\rm M} \Theta_{\rm m,\ell}\sum_{p=0}^{\rm k} A^{\rm p}_{\mathcal E}\mathcal{D}_{\mathcal{E}}^{-1} \Big(F^{(\rm k-\rm p)}_{\ell,n}\Big)^T \boldsymbol{u}^{(\rm k -\rm p + 1)}_{\ell,n},\label{eqn:DeCinternal_energy}\\
    \boldsymbol x^{(\rm k+1)}_{\rm m,n} = \boldsymbol x_n + \Delta t \sum_{\ell=0}^{\rm M} \Theta_{\rm m,\ell}\boldsymbol u^{(\rm k+1)}_{\ell,n}\label{eqn:DeCmotion},
\end{align}
\end{subequations}
noting the equation of motion uses a standard ODE DeC formulation.  The application of the DeC iteration to the Lagrangian system is 
detailed algorithmically in Algorithm \ref{alg:DeC} below. We see that with the formulation given in \eqref{eqn:NeumannDeC}, the precomputed 
lumped mass matrices and ${A}_{\mathcal{V}}$, ${A}_{\mathcal{E}}$ matrices can be applied recursively to the current correction residuals.
We note that a convergence criterion can be added to the algorithm as suggested in \cite{Speck.2015} to adaptively select the number of 
correction steps to achieve a desired error.  We note that the storage requirements for the DeC iteration are similar to those of an explicit
Runge-Kutta integrator since only the values at the sub-time nodes are stored, equivalent to the stages of a Runge-Kutta integrator, and we
need not explicitly store the correction levels. 
\begin{algorithm}
\caption{DeC Iteration}
\label{alg:DeC}
\begin{algorithmic}[1]
\State \textbf{Input:} Initial conditions $\boldsymbol{x}_n, \boldsymbol{u}_n, e_n$ for time interval $I_n$ and specify $\rm M$ and $\rm K$;
\State \textbf{Precompute:} $\mathcal{D}_{\mathcal V}^{-1},\;\mathcal{D}_{\mathcal E}^{-1},\;
        A_{\mathcal V} = (I - \mathcal D_{\mathcal V}^{-1}\mathcal M_{\mathcal V}),\;
        A_{\mathcal E} = (I - \mathcal D_{\mathcal E}^{-1}\mathcal M_{\mathcal E})$
\State \textbf{Initialization:}
\For{$\rm m = 1,\ldots,\rm M$ \textbf{and} $\rm k=0$}
    \State $\boldsymbol{x}^{(0)}_{\rm m,n} \gets \boldsymbol{x}_n,\;
            \boldsymbol{u}^{(0)}_{\rm m,n}\gets \boldsymbol{u}_n,\;
            e^{(0)}_{\rm m,n} \gets e_n$
\EndFor
\For{$\rm k = 1,\ldots,\rm K$ \textbf{and} $\rm m=0$}
    \State $\boldsymbol{x}^{(\rm k)}_{\rm 0,n} \gets \boldsymbol{x}_n,\;
            \boldsymbol{u}^{(\rm k)}_{\rm 0,n}\gets \boldsymbol{u}_n,\;
            e^{(\rm k)}_{\rm 0,n} \gets e_n$
\EndFor
\For{$\ell = 0,\ldots,\rm M$}\Comment{Seed the Neumann accumulators.}
    \State $\boldsymbol\delta_{\ell} \gets \boldsymbol 0,\;
            \eta_{\ell} \gets 0$
\EndFor
\State \textbf{Iteration:}
\For{$\rm k = 0,\ldots,\rm K-1$}
    \For{$\ell = 0,\ldots,\rm M$}\Comment{Assemble force tensor and kinematic accumulator.}
        \State Assemble $F^{(\rm k)}_{\ell,n}$
        \State $\boldsymbol\delta_{\ell} \gets \mathcal D_{\mathcal V}^{-1}
                F^{(\rm k)}_{\ell,n}\mathds{1}_{\mathcal E}
                + A_{\mathcal V}\,\boldsymbol\delta_{\ell}$
    \EndFor
    \For{$\rm m = 1,\ldots,\rm M$}\Comment{Momentum update.}
        \State $\boldsymbol u^{(\rm k+1)}_{\rm m,n} \gets \boldsymbol u_n
                - \Delta t \sum_{\ell=0}^{\rm M} \Theta_{\rm m,\ell}\,
                  \boldsymbol\delta_{\ell}$
    \EndFor
    \For{$\ell = 0,\ldots,\rm M$}\Comment{Thermodynamic accumulator.}
        \State $\eta_{\ell} \gets \mathcal D_{\mathcal E}^{-1}
                \big(F^{(\rm k)}_{\ell,n}\big)^T\boldsymbol{u}^{(\rm k+1)}_{\ell,n}
                + A_{\mathcal E}\,\eta_{\ell}$
    \EndFor
    \For{$\rm m = 1,\ldots,\rm M$}\Comment{Internal energy update.}
        \State $e^{(\rm k+1)}_{\rm m,n} \gets e_n
                + \Delta t \sum_{\ell=0}^{\rm M} \Theta_{\rm m,\ell}\,\eta_{\ell}$
    \EndFor
    \For{$\rm m = 1,\ldots,\rm M$}\Comment{Mesh motion.}
        \State $\boldsymbol x^{(\rm k+1)}_{\rm m,n} \gets \boldsymbol x_n
                + \Delta t \sum_{\ell=0}^{\rm M} \Theta_{\rm m,\ell}\,
                  \boldsymbol u^{(\rm k+1)}_{\ell,n}$
    \EndFor
    \If{$\rm k < \rm K-1$}
        \For{$\ell = 0,\ldots,\rm M$}
            \Comment{Update state-dependent quantities.}
            \State $J^{(\rm k+1)}_{\ell} \gets \boldsymbol x^{(\rm k+1)}_{\ell,n}$
                   \Comment{Update the Jacobian data from the new positions.}
            \State $\rho^{(\rm k+1)}_{\ell} \gets
                    \rho_0\,\dfrac{\det J_0}{\det J^{(\rm k+1)}_{\ell}}$
            \State $\boldsymbol\sigma^{(\rm k+1)}_{\ell},\,p^{(\rm k+1)}_{\ell},\,
                    c_{s,\ell}^{(\rm k+1)}\gets
                    \mathrm{EoS}\big(\rho^{(\rm k+1)}_{\ell}, e^{(\rm k+1)}_{\ell,n}\big)$
        \EndFor
    \EndIf
\EndFor
\State \textbf{Finalize:}
\State $\boldsymbol{u}_{n+1} \gets \boldsymbol{u}^{(\rm K)}_{\rm M,n},\;
        e_{n+1} \gets e^{(\rm K)}_{\rm M,n},\;
        \boldsymbol{x}_{n+1} \gets \boldsymbol{x}^{(\rm K)}_{\rm M,n},\;
        \rho_{n+1} \gets \rho_0\,\dfrac{\det J_0}{\det J^{(\rm K)}_{\rm M}}$
\State $\boldsymbol\sigma_{n+1},\,p_{n+1},\,c_{s,n+1} \gets
        \mathrm{EoS}(\rho_{n+1}, e_{n+1})$
\end{algorithmic}
\end{algorithm}

\subsection{Conservation}\label{sec:DeCConservation}
We now investigate the conservation properties of the cG-DeC method applied to the Lagrangian hydrodynamics equations.  To begin, we recall the following properties.
\begin{proposition}
  Let $A = (I-\mathcal{D}^{-1}\mathcal M)$ where $\mathcal D$ is the diagonal matrix formed 
  from summing the rows of the SPD matrix $\mathcal M$. Then, we have the following:
  \begin{enumerate}[label=\roman*]
    \item $A \mathcal D^{-1} = \mathcal{D}^{-1}A^T$,\label{prop:one}
    \item $\mathcal D A \mathcal D^{-1} = A^T$,\label{prop:two}
    \item $\mathcal{D} A^{\rm p} \mathcal{D}^{-1} = \Big(A^T\Big)^{\rm p},$ for $\rm p\geq 0$, \label{prop:three}
    \item $\mathds{1}A^T = \boldsymbol 0^T$.\label{prop:four}
  \end{enumerate}
\end{proposition}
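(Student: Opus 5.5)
The plan is to verify each identity directly from the definition $A = I - \mathcal D^{-1}\mathcal M$, using only three structural facts: $\mathcal M$ is symmetric, $\mathcal D$ is diagonal (so $\mathcal D^T=\mathcal D$ and $(\mathcal D^{-1})^T=\mathcal D^{-1}$), and $\mathcal D\mathds 1 = \mathcal M\mathds 1$ by construction of the row-sum lumping. The invertibility of $\mathcal D$ is not automatic from SPD-ness of $\mathcal M$ alone. I will invoke the standing assumption made earlier in Section~\ref{sec:FE} that the row-sum lumped masses are non-degenerate (GLL Lagrange on tensor-product elements, Bernstein or linear Lagrange on simplices), so that every diagonal entry of $\mathcal D$ is positive. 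This is the only real subtlety, and I would state it explicitly at the outset.

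For \ref{prop:one}, I expand both sides. The left side is
\[
A\mathcal D^{-1} = \mathcal D^{-1} - \mathcal D^{-1}\mathcal M\mathcal D^{-1}.
\]
For the right side, $A^T = I - \mathcal M^T(\mathcal D^{-1})^T = I - \mathcal M\mathcal D^{-1}$, so
\[
\mathcal D^{-1}A^T = \mathcal D^{-1} - \mathcal D^{-1}\mathcal M\mathcal D^{-1}.
\]
The two expressions coincide. Item \ref{prop:two} then follows by left-multiplying \ref{prop:one} by $\mathcal D$.

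For \ref{prop:three}, I argue by induction on $\rm p$. The case $\rm p=0$ is $\mathcal D\mathcal D^{-1}=I$. For the inductive step, insert $\mathcal D^{-1}\mathcal D$ between factors:
\[
\mathcal D A^{\rm p+1}\mathcal D^{-1} = (\mathcal D A\mathcal D^{-1})(\mathcal D A^{\rm p}\mathcal D^{-1}) = A^T (A^T)^{\rm p},
\]
using \ref{prop:two} and the inductive hypothesis.

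For \ref{prop:four}, I read $\mathds 1$ as a row vector of ones. Then
\[
\mathds 1 A^T = (A\mathds 1)^T, \qquad A\mathds 1 = \mathds 1 - \mathcal D^{-1}\mathcal M\mathds 1 = \mathds 1 - \mathcal D^{-1}\mathcal D\mathds 1 = \boldsymbol 0,
\]
since $\mathcal M\mathds 1 = \mathcal D\mathds 1$ by the definition $\mathcal D = \mathrm{diag}(\mathcal M\mathds 1)$.

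None of these steps is difficult. The only point needing care is the invertibility of $\mathcal D$ noted above, together with correctly placing the transpose in \ref{prop:four}. The property actually used is $A\mathds 1=\boldsymbol 0$, i.e. $A$ annihilates constants, which is what will later yield conservation when summing the Neumann-series terms of \eqref{eqn:DeCmomentum} against the lumped masses.
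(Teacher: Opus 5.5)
Your proof is correct and is essentially the paper's argument: (i), (ii) and (iv) follow by direct computation, and (iii) by induction on $\mathrm{p}$. For (iii) the paper moves $\mathcal D^{-1}$ through one factor of $A$ at a time via (i) and closes with (ii), which amounts to your factorization $\mathcal D A^{\mathrm{p}+1}\mathcal D^{-1} = (\mathcal D A\mathcal D^{-1})(\mathcal D A^{\mathrm{p}}\mathcal D^{-1})$. Your remarks that invertibility of $\mathcal D$ comes from the non-degenerate lumping assumption rather than from SPD-ness of $\mathcal M$, and that (iv) needs only $\mathcal M\mathds 1 = \mathcal D\mathds 1$ (symmetry is needed only for (i)--(iii)), are accurate and make explicit what the paper leaves implicit.
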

\begin{proof}
Properties \eqref{prop:one} and \eqref{prop:two} follow from direct computation. To prove \eqref{prop:three}, write $\mathcal DA^{\rm p} \mathcal D^{-1} = \mathcal DA^{\rm p-1} A\mathcal D^{-1} = \mathcal D A^{\rm p-1} \mathcal D^{-1}A^T$ which follows from property \eqref{prop:one} above.  Then, by repeated application
we have $$\mathcal D A^{\rm p-1} \mathcal D^{-1}A^T = \mathcal D A^{\rm p -2} \mathcal D^{-1} \Big( A^T \Big)^2 = \ldots = \mathcal{D}A\mathcal{D}^{-1}\Big(A^T\Big)^{\rm p -1}$$ and the result follows from applying property \eqref{prop:two} to obtain the final power. Property \eqref{prop:four} follows from a direct computation and the fact that $\mathcal M$ is symmetric and $\mathcal D$ is its row sum.
\end{proof}

We now arrive at our first result. 

\begin{proposition}
    The momentum is conserved at the fully discrete level. 
\end{proposition}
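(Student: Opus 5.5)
The total discrete momentum at sub-time node $t_{\rm m,n}$ after correction $\rm k$ is $\mathds{1}_{\mathcal V}^T\mathcal{M}_{\mathcal V}\boldsymbol u^{(\rm k)}_{\rm m,n}$. Since $\mathds{1}_{\mathcal V}^T\mathcal M_{\mathcal V} = \mathds{1}_{\mathcal V}^T\mathcal D_{\mathcal V}$ by the definition of the row-sum lumped mass and the symmetry of $\mathcal M_{\mathcal V}$, this equals $\mathds{1}_{\mathcal V}^T\mathcal{D}_{\mathcal V}\boldsymbol u^{(\rm k)}_{\rm m,n}$. The plan is to show that, in the absence of boundary forcing, $\mathds{1}_{\mathcal V}^T\mathcal D_{\mathcal V}\boldsymbol u^{(\rm k+1)}_{\rm m,n} = \mathds{1}_{\mathcal V}^T\mathcal D_{\mathcal V}\boldsymbol u_n$ for every $\rm m$ and $\rm k$. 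Taking $\rm m=\rm M$, $\rm k+1=\rm K$ then gives conservation from $t_n$ to $t_{n+1}$ at the fully discrete level.

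First I multiply \eqref{eqn:DeCmomentum} on the left by $\mathds{1}_{\mathcal V}^T\mathcal D_{\mathcal V}$. The increment becomes
$$-\Delta t\sum_{\ell=0}^{\rm M}\Theta_{\rm m,\ell}\sum_{\rm p=0}^{\rm k}\mathds{1}_{\mathcal V}^T\mathcal D_{\mathcal V}A_{\mathcal V}^{\rm p}\mathcal D_{\mathcal V}^{-1}F^{(\rm k-\rm p)}_{\ell,n}\mathds{1}_{\mathcal E}.$$
By property \eqref{prop:three} of the preceding proposition, $\mathcal D_{\mathcal V}A_{\mathcal V}^{\rm p}\mathcal D_{\mathcal V}^{-1} = (A_{\mathcal V}^T)^{\rm p}$. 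By property \eqref{prop:four}, $\mathds{1}^T(A_{\mathcal V}^T)^{\rm p} = \boldsymbol 0^T$ for every $\rm p\geq 1$. So every Neumann term with $\rm p\ge1$ drops out of the total momentum, and only the $\rm p=0$ term survives:
$$-\Delta t\sum_{\ell}\Theta_{\rm m,\ell}\,\mathds{1}_{\mathcal V}^T F^{(\rm k)}_{\ell,n}\mathds{1}_{\mathcal E}.$$
This is the key structural point. It reduces the fully discrete statement to the semi-discrete one, regardless of how many corrections are taken or how the correction levels are mixed.

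It remains to show $\mathds{1}_{\mathcal V}^T F\mathds{1}_{\mathcal E} = 0$ for any admissible state, so in particular for each $F^{(\rm k)}_{\ell,n}$, and for each spatial direction $r$ separately. The vector $\mathds{1}_{\mathcal E}$ sums $\psi_{j'}$ to one by partition of unity. Summing over all kinematic degrees of freedom $i$ then gives, element by element, $\sum_{i'}\partial_{\hat{\bs x}_s}\varphi_{i'} = \partial_{\hat{\bs x}_s}\big(\sum_{i'}\varphi_{i'}\big)=0$, using the partition of unity of the kinematic basis (Lagrange GLL and Bernstein bases both have it). Each quadrature contribution to $\sum_{i}F_{i,j,r}$ therefore vanishes identically, elementwise, at every quadrature point.

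The main obstacle I expect is the treatment of boundary conditions. With $\bs u\cdot\hat n=0$ imposed by eliminating or projecting normal velocity rows, the rows of $F$ and of $\mathcal M_{\mathcal V}$ are modified, and the identity above only holds for the unconstrained components. I would handle this by stating conservation for the momentum components not constrained by the wall, or by assuming periodic or free boundaries, where the column sums of the unmodified $F$ vanish. I would also check that the row-sum properties still hold for the modified matrices restricted to the free degrees of freedom. With that caveat stated, summing the vanishing increments over $\ell$ completes the argument.
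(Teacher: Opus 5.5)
Your proposal is correct and follows the paper's proof essentially step for step. You multiply the momentum update by $\mathds{1}_{\mathcal V}^T\mathcal D_{\mathcal V}$, use properties (iii) and (iv) to remove every Neumann term with $\mathrm{p}\geq 1$, and kill the surviving $\mathrm{p}=0$ term via $\sum_{i'}\partial_{\hat{\bs x}_s}\varphi_{i'}=0$ from the kinematic partition of unity. (The thermodynamic partition of unity is not needed, since $\sum_i F_{i,j,r}=0$ already holds for each fixed $j$ and $r$.)

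Your additions are sound refinements. The first is conservation at every sub-time node and correction level. The second is the caveat that when $\bs u\cdot\hat n=0$ is imposed by eliminating normal-velocity rows, the argument only covers unconstrained momentum components; the paper's proof, which works with the unmodified $F$, does not discuss this.
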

\begin{proof}
To see this, consider the following.
  Using the row-sum lumped mass and symmetry of $\mathcal{M}_{\mathcal{V}}$, we have $\mathds{1}_{\mathcal{V}}\mathcal M_{\mathcal{V}}=\mathds{1}_{\mathcal{V}}\mathcal D_{\mathcal{V}}$, so that
  $$
   \mathds{1}_{\mathcal{V}}\mathcal D_{\mathcal{V}}\boldsymbol{u}_{n+1} = \mathds{1}_{\mathcal{V}}\mathcal D_{\mathcal{V}}\boldsymbol{u}_{n} - \Delta t \sum_{\ell=0}^{\rm M}\Theta_{\rm M, \ell} \sum_{\rm p =0}^{\rm K-1} \mathds{1}_{\mathcal{V}} \mathcal D_{\mathcal{V}}A_{\mathcal{V}}^{\rm p} \mathcal D_{\mathcal{V}}^{-1} F^{(\rm K -\rm p-1)}_{\ell, n}\mathds{1}_{\mathcal{E}}.
  $$
  We have by property \eqref{prop:three} that
  \begin{equation*}
    \begin{aligned}
      \mathds{1}_{\mathcal{V}}\mathcal D_{\mathcal{V}}\boldsymbol{u}_{n+1} = \mathds{1}_{\mathcal{V}}\mathcal D_{\mathcal{V}}\boldsymbol{u}_{n} - \Delta t \sum_{\ell=0}^{\rm M}\Theta_{\rm M, \ell} \sum_{\rm p =0}^{\rm K-1} \mathds{1}_{\mathcal{V}} \Big(A_{\mathcal{V}}^T\Big)^{\rm p}F^{(\rm K -\rm p-1)}_{\ell, n}\mathds{1}_{\mathcal{E}}\\
      = \mathds{1}_{\mathcal{V}}\mathcal D_{\mathcal{V}}\boldsymbol{u}_{n} - \Delta t \sum_{\ell=0}^{\rm M}\Theta_{\rm M, \ell} \mathds{1}_{\mathcal{V}} F^{(\rm K-1)}_{\ell, n}\mathds{1}_{\mathcal{E}}\\
      = \mathds{1}_{\mathcal{V}}\mathcal D_{\mathcal{V}}\boldsymbol{u}_{n},
    \end{aligned}     
  \end{equation*}
  where we used property \eqref{prop:four} for $\rm p \geq 1$ and $\mathds{1}_{\mathcal{V}}F^{(\rm K-1)}_{\ell,n}=\boldsymbol{0}$ by the partition of unity property for the kinematic basis for all $\ell\in\{0,\ldots,\rm M\}$ and independent of the correction level.
\end{proof}

\subsubsection{Total Energy Conservation}
To conserve the total energy, we must modify the internal energy equation update. This is due to the corrections in the DeC method needed to account for the discrepancy between the low-order and 
high-order operator.  First, let us consider the change in the internal energy.  We denote this quantity $\delta IE$ and it follows naturally from \eqref{eqn:DeCinternal_energy} that it is defined to be
\begin{equation}
  \begin{aligned}
    \delta IE = \mathds{1}_{\mathcal{E}}\mathcal{D}_{\mathcal{E}} (e_{n+1}-e_{n}) = \Delta t\sum_{\ell=0}^{\rm M}\Theta_{\rm M, \ell} \sum^{\rm K-1}_{\rm p = 0}\mathds{1}_{\mathcal{E}}\mathcal{D}_{\mathcal{E}}A^{\rm p}_{\mathcal{E}}\mathcal D_{\mathcal{E}}^{-1} \Big( F^{(\rm K - \rm p -1)}_{\ell, n} \Big)^T\boldsymbol u^{(\rm K -\rm p)}_{\ell,n}\\
              = \Delta t\sum_{\ell=0}^{\rm M}\Theta_{\rm M, \ell} \sum^{\rm K-1}_{\rm p = 0}\mathds{1}_{\mathcal{E}}\Big(A^T_{\mathcal{E}}\Big)^{\rm p} \Big( F^{(\rm K - \rm p-1)}_{\ell, n} \Big)^T\boldsymbol u^{(\rm K -\rm p)}_{\ell,n}\\
              = \Delta t\sum_{\ell=0}^{\rm M}\Theta_{\rm M, \ell}\mathds{1}_{\mathcal{E}}\Big( F^{(\rm K-1)}_{\ell, n} \Big)^T\boldsymbol u^{(\rm K )}_{\ell,n},\\
  \end{aligned}
\end{equation} 
using properties \eqref{prop:three} and \eqref{prop:four} again.  Using the update for the momentum, we get
\begin{equation}\label{eqn:deltaIEnoexpansioninU}
  \begin{aligned}
    \delta IE = \Delta t\sum_{\ell=0}^{\rm M}\Theta_{\rm M, \ell}\mathds{1}_{\mathcal{E}}\Big( F^{(\rm K-1)}_{\ell, n} \Big)^T\Big(\boldsymbol u_n - \Delta t \sum_{\rm s=0}^{\rm M}\Theta_{\ell,\rm s}\sum_{\rm r=0}^{\rm K-1} A_{\mathcal{V}}^{\rm r}\mathcal{D}^{-1}_{\mathcal{V}}F^{(\rm K - \rm r-1)}_{\rm s,n}\mathds{1}_{\mathcal{E}}\Big),\\
              = \Delta t\sum_{\ell=0}^{\rm M}\Theta_{\rm M, \ell}\mathds{1}_{\mathcal{E}}\Big( F^{(\rm K-1)}_{\ell, n} \Big)^T\boldsymbol u_n - \Delta t^2 \sum_{\ell,\rm s=0}^{\rm M} \Theta_{\rm M, \ell}\Theta_{\ell, \rm s}\sum_{\rm r=0}^{\rm K-1} \mathds{1}_{\mathcal{E}}\Big(F^{(\rm K-1)}_{\ell, n}\Big)^T A_{\mathcal{V}}^{\rm r}\mathcal{D}_{\mathcal{V}}^{-1}F^{(\rm K-r-1)}_{s,n}\mathds{1}_{\mathcal{E}}.
  \end{aligned}
\end{equation}

Similarly, the change in kinetic energy, denoted $\delta KE$, is
\begin{equation}
  \begin{aligned}
  \delta KE = \frac12 \boldsymbol u_{n+1}\mathcal{D}_{\mathcal{V}}\boldsymbol{u}_{n+1} - \frac12\boldsymbol u_n\mathcal{D}_{\mathcal{V}}\boldsymbol u_n  = -\Delta t \sum_{\ell=0}^{\rm M} \Theta_{\rm M, \ell} \sum_{\rm p=0}^{\rm K-1}\mathds{1}_{\mathcal{E}}\Big(F^{(\rm K-\rm p-1)}_{\ell,n}\Big)^TA_{\mathcal{V}}^{\rm p}\boldsymbol{u}_n \\
              + \frac12 \Delta t^2 \sum_{\ell, \rm s =0}^{\rm M}\Theta_{\rm M, \ell}\Theta_{\rm M, \rm s} \sum_{\rm p, \rm r=0}^{\rm K-1} \mathds{1}_{\mathcal{E}} \Big(F^{(\rm K - \rm p-1)}_{\ell, n}\Big)^T A_{\mathcal{V}}^{\rm p + \rm r}\mathcal{D}_{\mathcal{V}}^{-1}F^{(\rm K - \rm r-1)}_{\rm s, n}\mathds{1}_{\mathcal{E}},
  \end{aligned}
\end{equation}
by using the update defined in equation \eqref{eqn:DeCmomentum} and expanding.  Now, considering the change in total energy, $\delta TE$, we have
\begin{equation}
  \begin{aligned}
    \delta TE = \delta KE + \delta IE = -\Delta t \sum_{\ell=0}^{\rm M} \Theta_{\rm M, \ell} \sum_{\rm p=1}^{\rm K-1}\mathds{1}_{\mathcal{E}}\Big(F^{(\rm K-\rm p-1)}_{\ell,n}\Big)^T A_{\mathcal{V}}^{\rm p} \boldsymbol{u}_n \\+ \frac12 \Delta t^2 \sum_{\ell, \rm s =0}^{\rm M}\Theta_{\rm M, \ell}\Theta_{\rm M, \rm s} \sum_{\rm p, \rm r=0}^{\rm K-1} \mathds{1}_{\mathcal{E}} \Big(F^{(\rm K - \rm p-1)}_{\ell, n}\Big)^TA_{\mathcal{V}}^{\rm p + \rm r}\mathcal{D}_{\mathcal{V}}^{-1}F^{(\rm K - r-1)}_{\rm s, n}\mathds{1}_{\mathcal{E}}\\
                          - \Delta t^2 \sum_{\ell,\rm s=0}^{\rm M} \Theta_{\rm M, \ell}\Theta_{\ell, \rm s}\sum_{\rm r=0}^{\rm K-1} \mathds{1}_{\mathcal{E}}\Big(F^{(\rm K-1)}_{\ell, n}\Big)^T A_{\mathcal{V}}^{\rm r}\mathcal{D}_{\mathcal{V}}^{-1}F^{(\rm K-r-1)}_{\rm s,n}\mathds{1}_{\mathcal{E}},
  \end{aligned}
\end{equation}
where we have a cancellation of order $\Delta t$ terms for $\rm p=0$.  The cancellation of the $\Delta t^2$ terms is more complicated. It does not appear to be possible to avoid correcting the internal energy to account for the higher iteration terms with $\rm p,\rm r > 0$ and even for the case $\rm p, \rm r = 0$ the situation is non-trivial.  Let us consider the conditions needed to obtain a cancellation of the $\Delta t^2$ terms for the case $\rm p,\, \rm r = 0$, 
\begin{equation}\label{eqn:dt_sqrd_TE}
    \begin{aligned}
        \frac12 \Delta t^2 \sum_{\ell, \rm s =0}^{\rm M}\Theta_{\rm M, \ell}\Theta_{\rm M, \rm s} \mathds{1}_{\mathcal{E}} \Big(F^{(\rm K-1)}_{\ell, n}\Big)^T\mathcal{D}_{\mathcal{V}}^{-1}F^{(\rm K-1)}_{\rm s, n}\mathds{1}_{\mathcal{E}}
                          \\- \Delta t^2 \sum_{\ell,\rm s=0}^{\rm M} \Theta_{\rm M, \ell}\Theta_{\ell, \rm s}\mathds{1}_{\mathcal{E}}\Big(F^{(\rm K-1)}_{\ell, n}\Big)^T \mathcal{D}_{\mathcal{V}}^{-1}F^{(\rm K-1)}_{\rm s,n}\mathds{1}_{\mathcal{E}}=0.
    \end{aligned}
\end{equation}
Immediately, one can see that a cancellation is not achievable with the DeC integrators without modifying the velocity contribution to the work term. Such an approach was taken in \cite{Sandu.2021} in the case of Runge-Kutta integrators, where a formally implicit-explicit partitioning was employed for the system \eqref{eqn:Euler} which made use of an auxiliary velocity variable that employs an implicit, symplectic tableau.  Such an extension may be possible for arbitrary order DeC integrators, but this is far beyond the scope of the present work. 

However, in the second-order case, the modification is simple enough and we provide it here.  To this end, define the time averaged velocity at correction $\rm k$ and time interval $I_n$ for $\rm M = 1$ to be $$\bar{\bs u}_n^{(\rm k)}=\frac{\bs u^{(\rm k)}_{1,n} + \bs u^{(\rm k)}_{0,n}}{2}.$$  Then the modified fully-discrete, second-order update is 
\begin{subequations}\label{eqn:2ndOrderFullyDiscreteModified}
\begin{align}
    \boldsymbol u^{(\rm k+1)}_{\rm 1,n} = \boldsymbol u_n - \frac{\Delta t}{2} \sum_{p=0}^{\rm k} A_{\mathcal{V}}^{\rm p}\mathcal{D}_{\mathcal{V}}^{-1} \Big(F^{(\rm k-\rm p)}_{1,n}+F^{(\rm k-\rm p)}_{0,n}\Big)\mathds{1}_{\mathcal E},\\
    e^{(\rm k+1)}_{\rm 1,n} = e_n + \frac{\Delta t}{2} \sum_{p=0}^{\rm k} A^{\rm p}_{\mathcal E}\mathcal{D}_{\mathcal{E}}^{-1} \Big(F^{(\rm k-\rm p)}_{1,n}+F^{(\rm k-\rm p)}_{0,n}\Big)^T \bar{\boldsymbol{u}}^{(\rm k + 1)}_{n},\\
    \boldsymbol x^{(\rm k+1)}_{\rm 1,n} = \boldsymbol x_n + \Delta t \bar{\boldsymbol{u}}^{(\rm k + 1)}_{n}.
\end{align}
\end{subequations}
Now consider the total energy change using \eqref{eqn:2ndOrderFullyDiscreteModified}. We have 
\begin{equation}
    \begin{aligned}
        \delta TE = \frac{\Delta t}{2}\mathds{1}_{\mathcal{E}}\Big( F^{(K-1)}_{1,n} + F^{(K-1)}_{0,n} \Big)^T \bar{\bs u}^{(\rm K)}_{n} - \frac{\Delta t}{2} \bar{\bs u}^{(\rm K)}_{n} \sum_{\rm p = 0}^{\rm K - 1} \Big(A_{\mathcal{V}}^T\Big)^{\rm p} \Big(F^{(\rm K - \rm p -1)}_{1,n} + F^{(\rm K - \rm p -1)}_{0,n}\Big)\mathds{1}_{\mathcal{E}}\\
        = -\frac{\Delta t}{2} \bar{\bs u}^{(\rm K)}_{n}\sum_{\rm p = 1}^{\rm K - 1} \Big(A_{\mathcal{V}}^T\Big)^{\rm p} \Big(F^{(\rm K - \rm p -1)}_{1,n} + F^{(\rm K - \rm p -1)}_{0,n}\Big)\mathds{1}_{\mathcal{E}},
    \end{aligned}
\end{equation}
where we have used $\delta KE = \bar{\bs u}^{(\rm K)}_n \mathcal{D}_{\mathcal{V}} (\bs u_{n+1} - \bs u_n)$ for the definition of the kinetic energy which is algebraically equivalent to the definition used in the previous discussion.

Therefore, we can provide a correction for the modified second-order DeC method \eqref{eqn:2ndOrderFullyDiscreteModified} which conserves the total energy and is independent of the number of iterations.
\begin{proposition}
  For the modified second-order system \eqref{eqn:2ndOrderFullyDiscreteModified} with $\rm K$ corrections, applying the total energy correction term
  \begin{equation}\label{eqn:deltaF}
      \delta F = \frac{\Delta t}{2}\sum_{\rm p=1}^{\rm K-1}\mathcal{D}_{\mathcal{E}}^{-1}\Big(F^{(\rm K-\rm p-1)}_{1,n} + F^{(\rm K-\rm p-1)}_{0,n} \Big)^TA_{\mathcal{V}}^{\rm p} \bar{\boldsymbol{u}}^{(\rm K)}_{n},
  \end{equation}
   to the internal energy update in \eqref{eqn:2ndOrderFullyDiscreteModified} at the final iteration conserves the total energy. 

\end{proposition}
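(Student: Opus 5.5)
The plan is to compute the change in total energy $\delta TE = \delta KE + \delta IE$ for the modified second-order scheme \eqref{eqn:2ndOrderFullyDiscreteModified} once $\delta F$ is added to the final internal-energy update, and to show that the extra internal-energy increment $\mathds{1}_{\mathcal E}\mathcal D_{\mathcal E}\,\delta F$ cancels exactly the residual term computed just before the statement. That residual is
\[
-\frac{\Delta t}{2}\,\bar{\bs u}^{(\rm K)}_n\sum_{\rm p=1}^{\rm K-1}\big(A_{\mathcal V}^T\big)^{\rm p}\big(F^{(\rm K-\rm p-1)}_{1,n}+F^{(\rm K-\rm p-1)}_{0,n}\big)\mathds{1}_{\mathcal E}.
\]
Since that computation is taken as given, the whole proof reduces to evaluating the total-energy contribution of the correction term.

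First, the modified final internal energy is $e_{n+1} = e^{(\rm K)}_{1,n} + \delta F$. I take the correction to be applied only to the energy and not fed back into the velocity or the positions. Then $\delta KE$ is unchanged, and $\delta IE$ acquires the additional piece
\[
\mathds{1}_{\mathcal E}\mathcal D_{\mathcal E}\,\delta F = \frac{\Delta t}{2}\sum_{\rm p=1}^{\rm K-1}\mathds{1}_{\mathcal E}\big(F^{(\rm K-\rm p-1)}_{1,n}+F^{(\rm K-\rm p-1)}_{0,n}\big)^T A_{\mathcal V}^{\rm p}\,\bar{\bs u}^{(\rm K)}_n,
\]
because the factors $\mathcal D_{\mathcal E}$ and $\mathcal D_{\mathcal E}^{-1}$ cancel directly. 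It is worth noting that this piece involves no $A_{\mathcal E}$, so property \eqref{prop:four} applied to $A_{\mathcal E}$ plays no role in it.

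Second, transpose this scalar. Each term $\mathds{1}_{\mathcal E}G^T A_{\mathcal V}^{\rm p}\bar{\bs u}$ equals $\bar{\bs u}^T (A_{\mathcal V}^T)^{\rm p} G\,\mathds{1}_{\mathcal E}$, where $G$ denotes the summed force matrix $F^{(\rm K-\rm p-1)}_{1,n}+F^{(\rm K-\rm p-1)}_{0,n}$. This is exactly the negative of the residual term in $\delta TE$, summand by summand over $\rm p=1,\dots,\rm K-1$, so the two cancel and $\delta TE = 0$.

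Third, I confirm the expression for $\delta TE$ preceding the statement. Two ingredients are needed there. One is the identity $\delta KE=\bar{\bs u}^{(\rm K)}_n\mathcal D_{\mathcal V}(\bs u_{n+1}-\bs u_n)$, which holds because $\mathcal D_{\mathcal V}$ is symmetric, so it is a difference of squares. The other is property \eqref{prop:three}, $\mathcal D_{\mathcal V}A_{\mathcal V}^{\rm p}\mathcal D_{\mathcal V}^{-1}=(A_{\mathcal V}^T)^{\rm p}$, which converts the momentum update. The $\rm p=0$ term then cancels against the uncorrected $\delta IE$. That uncorrected $\delta IE$ collapses to its $\rm p=0$ term by properties \eqref{prop:three} and \eqref{prop:four} for $A_{\mathcal E}$.

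The main point requiring care is bookkeeping rather than difficulty. The correction must use the same $\bar{\bs u}^{(\rm K)}_n$ and the same force tensors $F^{(\rm K-\rm p-1)}$ as the momentum update, and it must be added only at the final iteration so that it does not propagate through the $A_{\mathcal E}$-recursion. Given that, the result holds for any $\rm K$. For $\rm K=1$ the correction sum is empty, consistent with exact conservation without it.
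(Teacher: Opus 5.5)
Your proposal is correct and follows the paper's proof. The paper also adds $\delta F$ only at the final internal-energy update, recomputes $\delta IE$ to pick up the extra term $\frac{\Delta t}{2}\sum_{{\rm p}=1}^{{\rm K}-1}\mathds{1}_{\mathcal E}\big(F^{({\rm K}-{\rm p}-1)}_{1,n}+F^{({\rm K}-{\rm p}-1)}_{0,n}\big)^T A_{\mathcal V}^{\rm p}\bar{\bs u}^{({\rm K})}_n$, and adds $\delta KE$ to cancel the residual; you simply spell out the scalar-transpose step and the uses of properties (iii) and (iv) that the paper leaves implicit.
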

\begin{proof}
  Applying the proposed correction to the final iteration of the internal energy equation in \eqref{eqn:2ndOrderFullyDiscreteModified} above and recomputing $\delta IE$ as in \eqref{eqn:deltaIEnoexpansioninU} above, we get
  \begin{equation}
    \begin{aligned}
      \delta IE = \frac{\Delta t}{2}\mathds{1}_{\mathcal{E}}\Big( F^{(\rm K-1)}_{1, n}+F^{(\rm K-1)}_{0, n} \Big)^T\bar{\boldsymbol u}^{(\rm K)}_n + \frac{\Delta t}{2} \sum_{\rm p=1}^{\rm K-1}\mathds{1}_{\mathcal{E}}\Big(F^{(\rm K-\rm p-1)}_{1,n}+ F^{(\rm K-\rm p-1)}_{0,n}\Big)^T A_{\mathcal{V}}^{\rm p} \bar{\boldsymbol{u}}^{(\rm K)}_n.
    \end{aligned}
  \end{equation}
  Adding $\delta KE$ gives the result.
\end{proof}

We note that for the modified second-order discretization \eqref{eqn:2ndOrderFullyDiscreteModified}, if $\mathcal{M}_{\mathcal{V}} = \mathcal{D}_{\mathcal{V}}$ the total energy correction is not required and \eqref{eqn:2ndOrderFullyDiscreteModified} conserves the total energy by construction.  However, in the high-order case with $\rm M > 1$, even if one assumes $\mathcal{M}_{\mathcal{V}} = \mathcal{D}_{\mathcal{V}}$, a further modification to the fully discrete system is required in light of \eqref{eqn:dt_sqrd_TE} above.  On tensor-product grids, one may use high-order mass lumping via collocated quadrature to obtain $\mathcal{M}_{\mathcal{V}}=\mathcal{D}_{\mathcal{V}}$ and eliminate the discrepancy between the lumped and consistent mass matrix. However, from our experiments, under-integration of the force tensor for $d\geq 2$ impacts stability of the method.  Further, the issues just discussed highlighted by \eqref{eqn:dt_sqrd_TE} must still be addressed. In such a setting, that is, collocated mass lumping on tensor-product grids where the need for a Krylov solver is obviated, it is not easy to see the advantage of developing total energy conserving DeC integrators for the system \eqref{eqn:Euler} when Sandu et al. have already solved the problem for Runge-Kutta integrators in \cite{Sandu.2021}.

In the next section we provide numerical demonstrations of the cG-DeC method up to fourth order.  For the second-order case, we compare the cG-DeC method with and without the total energy correction on classic shock test problems.  The second order methods are compared with a continuous Galerkin discretization using a conservative second-order Runge-Kutta integrator \cite{Dobrev.2012}, while our third and fourth order schemes are compared with continuous Galerkin methods using third order strong-stability preserving and fourth order Runge-Kutta integrators, respectively.

\section{Numerical Results}\label{sec:Tests}
Our numerical tests are carried out in the Laghos miniapp \cite{laghos_github} based on the MFEM finite element library \cite{mfem}. As a result, we can provide a direct comparison with the continuous finite element discretization using explicit Runge-Kutta integrators described in \cite{Dobrev.2012}, which we will refer to as the cG-RK method.  The second order cG-DeC method with total energy correction \eqref{eqn:deltaF} will be referred to as the cG-DeC-$\delta F$ method.  The implementation makes use of partial assembly \cite{Bello-Maldonado.2020} when using tensor-product meshes.  First, in this section, we investigate the convergence properties of the cG-DeC method on a smooth test using the limited artificial viscosity described in Section \ref{sec:FE}, considering the number of corrections needed to obtain the expected convergence rates as the mesh is refined.  Concerning the hyperviscosity limiter, a single iteration of the Neumann series approximation to the Laplacian was sufficient for all tests.  
Performance comparisons of the cG-DeC method with the cG-RK method are provided for this smooth test problem.  Next we compare the cG-DeC, cG-DeC-$\delta F$ and cG-RK methods in terms of global $L_1$ error and total energy conservation on the Sedov blast.  Further comparisons are provided on the Noh and three-dimensional triple point problems.

\begin{figure}[ht!]
    \centering
    \includegraphics[width=0.35\linewidth]{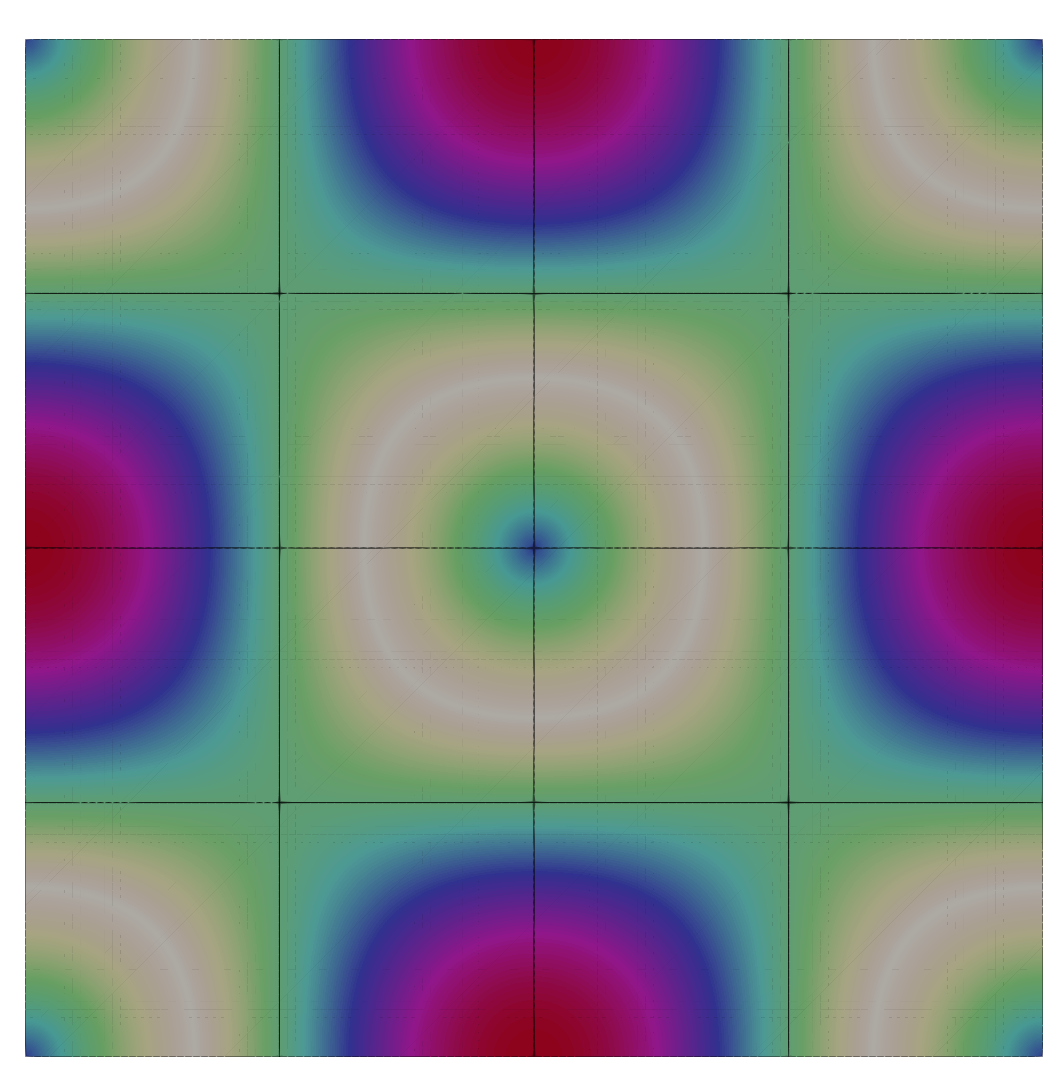}
    \includegraphics[width=0.35\linewidth]{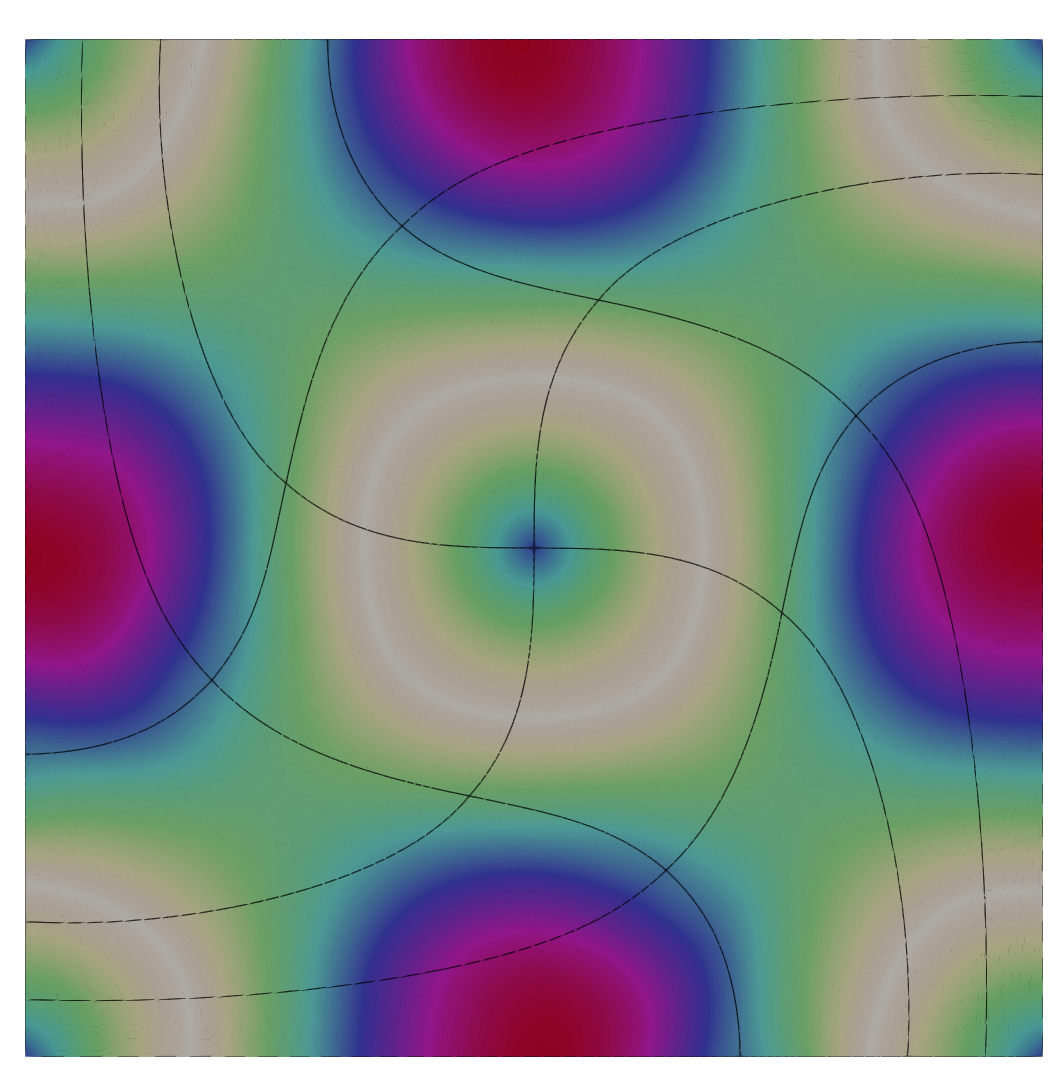}\\
    \includegraphics[width=0.45\linewidth]{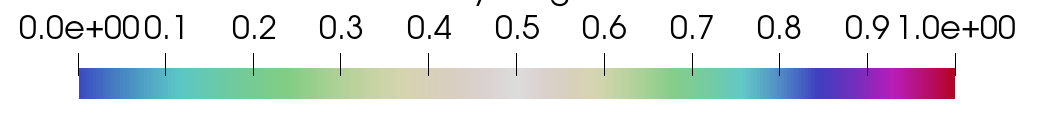}
    \caption{The Taylor-Green vortex problem using $Q_4Q_3$ elements. (Left) The magnitude of the velocity and mesh at $t=0$. (Right) The magnitude of the velocity and mesh at $t=0.5$. }
    \label{fig:TGV_fields}
\end{figure}
\subsection{Taylor-Green Vortex}\label{sec:TGV}
To test the convergence properties of the method we employ the Taylor-Green vortex test problem.  
The problem models a smooth, stationary incompressible flow so that $\bs u(t,\bs x(t,\hat{\bs x})) = \bs u_0(\bs x(0,\hat{\bs x}))$ 
and similarly for the pressure and specific internal energy.  The density is constant so that $\rho \equiv 1$
and $\nabla \cdot \bs u = 0$.  The initial conditions are given by
    \begin{equation}\label{eqn:TG_init_vel}
        \begin{aligned}
            \bs u_0(\bs x) = \begin{pmatrix}
                                \sin(\pi x)\cos(\pi y) \\
                                -\cos(\pi x)\sin(\pi y) 
                             \end{pmatrix},
        \end{aligned}
    \end{equation} 
for the velocity field, using the common notation $\bs x(t,\hat{\bs x}) = (x, y)^T$ above.  
The pressure and specific internal energy are initially set to be
    \begin{equation}\label{eqn:TG_init_pressure_and_sie}
        \begin{aligned}
            p_0(0,\bs x(0,\hat{\bs x})) = \frac14 (\cos(2\pi x) + \cos(2\pi y)) + 1\\
            e_0(0,\bs x(0,\hat{\bs x})) = p_0(0,\bs{x}(0,\hat{\bs x}))/((\gamma - 1)\rho),
        \end{aligned}
    \end{equation} 
where $\gamma = \frac53$.

To maintain stationarity of the fields (see Figure \ref{fig:TGV_fields}) a source term is added using the evolution equation for the specific internal energy and the initial conditions for the velocity and pressure fields \cite{RD_SGH,Dobrev.2012,Guermond.2016.EntropyViscosity}.  The source term is then 
    \begin{equation}\label{eqn:TG_source_term}
        S(\bs x(t,\hat{\bs x})) = \frac{3\pi}{8}(\cos(3\pi x) \cos(\pi y) - \cos(\pi x)\cos(3\pi y)).
    \end{equation}
\begin{figure}[hb!]
    \centering
    \begin{subfigure}{0.36\linewidth}
        \centering
        \raisebox{1.7in}{\includegraphics[width=\linewidth]{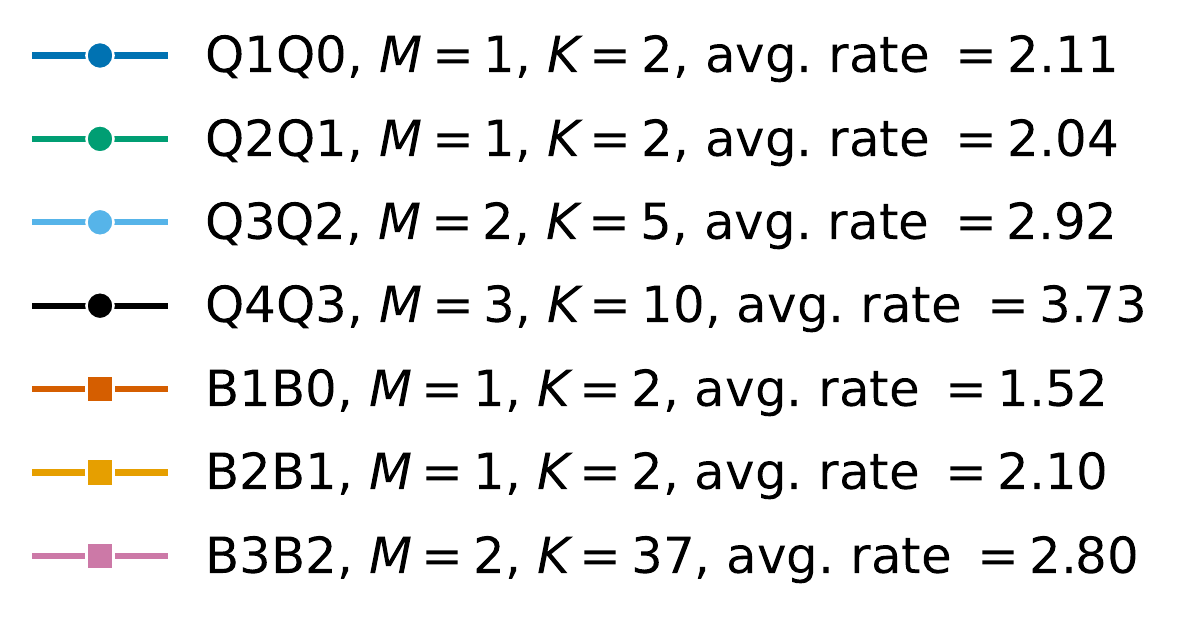}}
        \label{fig:convergence_B3B2}
    \end{subfigure}
    \begin{subfigure}{0.63\linewidth}
        \centering
        \includegraphics[width=\linewidth]{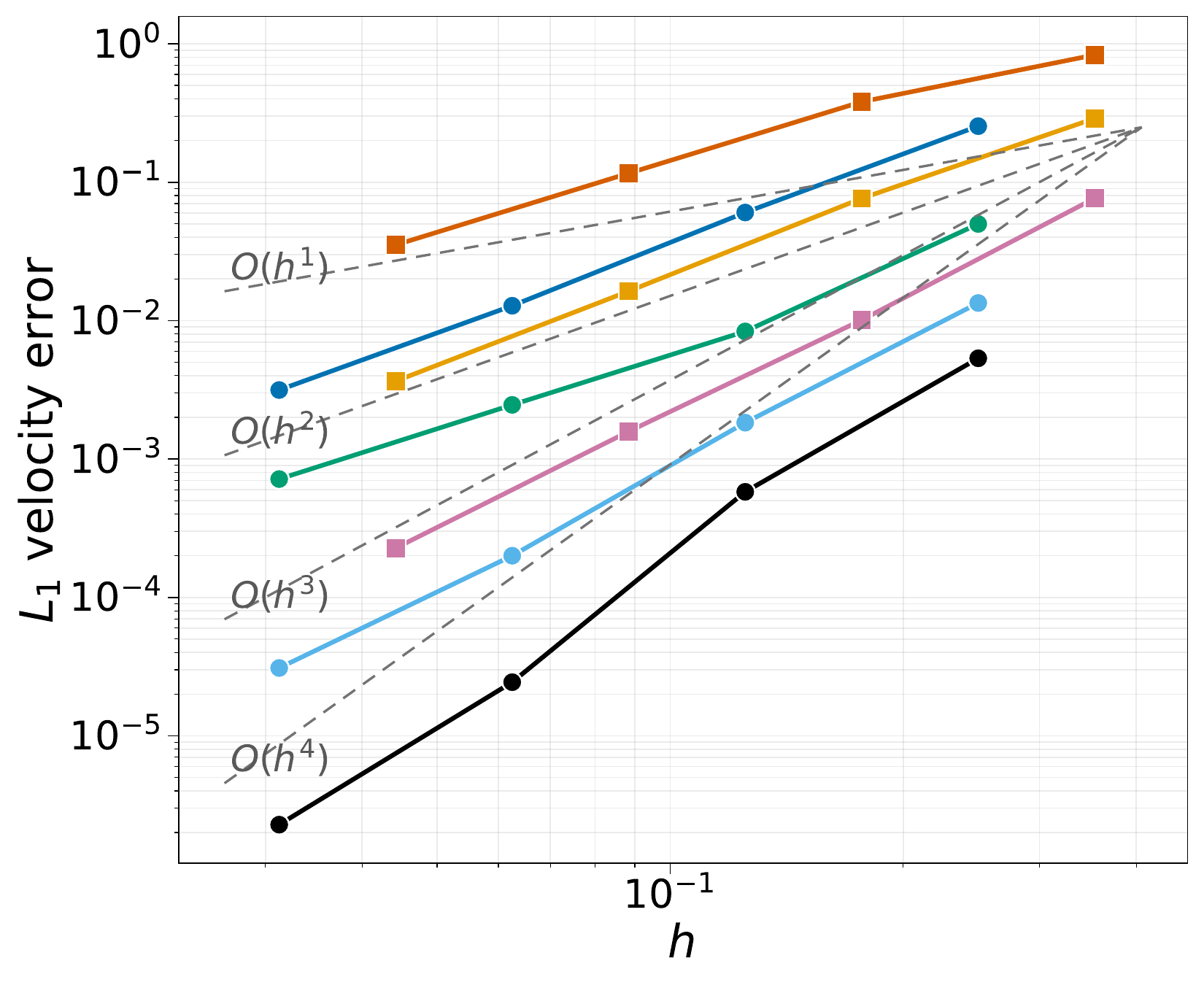}
        \label{fig:convergence_B2B1}
    \end{subfigure}
    \caption{Convergence rates for cG-DeC method on quadrilateral and triangular meshes for the Taylor-Green vortex problem. Lagrange polynomials are used on the quad meshes, indicated by $Q_{\rm q}Q_{\rm q-1}$ whereas triangles require Bernstein polynomials to avoid lumped mass degeneracy at all orders and are denoted $B_{\rm q}B_{\rm q-1}$ in the legend. For $\rm q=1,2$ we use the second-order DeC time integrator. For $\rm q > 2$, we use the corresponding $\rm q$th-order DeC integrator. The number of corrections needed to achieve convergence at all mesh resolutions and the average convergence rates for each method are reported in the legend.}
    \label{fig:convergence_tgv}
\end{figure}
In our convergence tests, we keep the $\rm{CFL}=0.5$ fixed.  In Figure \ref{fig:convergence_tgv}, we plot the convergence rates as measured in the $L_1$ norm against the exact solution.  For each polynomial order, we run the convergence study over a range of $\rm K$ values, which indicated that at the finer mesh scales more iterations are needed to obtain the expected convergence rates.  The convergence tests are run to $t=0.5$.  The tests begin with the number of corrections equal to the kinematic polynomial order and increase until the expected convergence is reached at all mesh refinement levels. The computed convergence rates and the number of corrections needed to obtain those convergence rates are given in the legend of Figure \ref{fig:convergence_tgv}. In each case, we leave the viscosity turned on, using the limiter described in Section \ref{sec:FE} above. We found that a single iteration of the Neumann series approximation to the Laplacian was enough to obtain the expected convergence rates across all tests.  The cG-DeC method is tested on both quadrilateral meshes with Lagrange basis functions using the GLL DoF distribution and on triangles with Bernstein basis functions.

For linear and quadratic elements, using both the Lagrange basis and the Bernstein basis, the number of corrections is optimal with the second-order DeC integrator {\emph{for these mesh resolutions}}. We see that for the $Q_3Q_2$ and $Q_4Q_3$ elements, we need five and ten corrections with the third- and fourth-order DeC integrator, respectively, to achieve the expected order of convergence.  For the $B_3B_2$ elements, we need 37 corrections to obtain the expected 3rd order convergence rates.  This is in agreement with the fact that the iteration depends on the powers of $A_{\mathcal{V}}$ and for Bernstein elements the spectral radius of $A_{\mathcal{V}}$ is very close to unity.
\begin{table}[hb!]
\centering
\begin{tabular}{llrrrrrrl}
\toprule
Order & Scheme & $n_{\Delta t}$ & $n_F$ & $n_{\mathcal{M}_{\mathcal{V}}}$ & $n_{\mathcal{M}_{\mathcal{E}}}$ & $n_Q$ & Total &
${L_1}$ Error \\
\midrule
$Q_1Q_0$ & cG-RK (RK2Avg) & 7 & 36 & 176 & 18 & 21 & 251 & $4.71\times 10^{-3}$ \\
$Q_1Q_0$ & cG-DeC ($M{=}1,K{=}2$) & 7 & 63 & 36 & 18 & 73 & 190 & $4.71\times 10^{-3}$ \\
\midrule
$Q_2Q_1$ & cG-RK (RK2Avg) & 13 & 60 & 440 & 30 & 33 & 563 & $1.83\times 10^{-4}$ \\
$Q_2Q_1$ & cG-DeC ($M{=}1,K{=}2$) & 13 & 105 & 60 & 30 & 121 & 316 & $1.44\times 10^{-4}$ \\
\midrule
$Q_3Q_2$ & cG-RK (RK3 SSP) & 20 & 132 & 510 & 198 & 69 & 909 & $4.83\times 10^{-6}$ \\
$Q_3Q_2$ & cG-DeC ($M{=}2,K{=}5$) & 20 & 616 & 528 & 264 & 639 & 2047 & $4.81\times 10^{-6}$ \\
\bottomrule
\end{tabular}
\caption{We measure the number of time steps, force tensor evaluations, kinematic and thermodynamic mass matrix applies and function evaluations at the quadrature points for the cG-DeC and cG-RK methods. The $L_1$ error of the velocity is also presented.  We see that for $Q_1Q_0$ and $Q_2Q_1$, the cG-DeC method has better total efficiency due entirely to the expense of the preconditioned conjugate gradient method in the cG-RK algorithm. For $Q_3Q_2$, the situation reverses owing to the increased number of corrections needed to obtain convergence.}
\label{tab:tgv_comparison}
\end{table}

Lastly, we compare the cG-RK method with the cG-DeC method.  In Table \ref{tab:tgv_comparison}, we give values for the number of time steps, $n_{\Delta t}$; the number of force tensor evaluations, $n_F$; the number of kinematic mass matrix applies, $n_{\mathcal{M}_{\mathcal{V}}}$; the number of thermodynamic mass matrix applies, $n_{\mathcal{M}_{\mathcal{E}}}$; the number of function evaluations at the quadrature points, $n_Q$; and the sum of $n_F$, $n_{\mathcal{M}_{\mathcal{V}}}$, $n_{\mathcal{M}_{\mathcal{E}}}$ and $n_Q$ represented as ``Total'' in Table \ref{tab:tgv_comparison}.  Further, we report the $L_1$ error for each method across the  element types.  For the obtained values, we run the TGV test until $t=0.1$ with the same CFL used in the convergence analysis.  The errors for the cG-DeC method are equal to or very slightly less than those obtained with the cG-RK method.  Considering the convergence studies above, it is unsurprising, given the additional number of corrections, that the cG-RK method is more efficient than the cG-DeC method for $Q_3Q_2$ elements.  However, for $Q_1Q_0$ and $Q_2Q_1$ elements, the cG-DeC method shows better efficiency overall.  

\subsection{Sedov}\label{sec:Sedov}
In the Taylor--Sedov (Taylor--von Neumann--Sedov) blast wave 
\cite{Sedov.1993.Similarity, Taylor.1950.BlastWaveI, Taylor.1950.BlastWaveII, Shestakov.1999, Kamm.2000}
an energy source is placed at the origin, which is quickly converted to kinetic
energy and produces a very strong, spherically symmetric blast wave. 
The test provides a means of verifying a Lagrangian scheme's ability to maintain
symmetry and stable mesh deformations while capturing the correct shock location,
an indication that energy conservation is maintained.
\begin{table}[h]
\centering
\small
\setlength{\tabcolsep}{4pt}
\begin{tabular}{lrrrrrrr}
\toprule
& & \multicolumn{3}{c}{$\delta TE$} & \multicolumn{3}{c}{$\|\rho-\rho_{\mathrm{ex}}\|_{L_1}$} \\
\cmidrule(lr){3-5}\cmidrule(lr){6-8}
Order & DoFs
& cG-RK & cG-DeC-$\delta F$ & cG-DeC
& cG-RK & cG-DeC-$\delta F$ & cG-DeC \\
\midrule
$Q_1Q_0$ & 7442 & $4.52{\times}10^{-10}$ & $5.55{\times}10^{-16}$ & $1.80{\times}10^{-3}$ & $1.37{\times}10^{-1}$ & $1.62{\times}10^{-1}$ & $1.21{\times}10^{-1}$ \\
$Q_2Q_1$ & 7442 & $3.37{\times}10^{-13}$ & $2.78{\times}10^{-16}$ & $2.72{\times}10^{-4}$ & $1.51{\times}10^{-1}$ & $1.69{\times}10^{-1}$ & $1.49{\times}10^{-1}$ \\
$Q_3Q_2$ & 7442 & $3.27{\times}10^{-4}$ & {--- } & $1.02{\times}10^{-5}$ & $1.49{\times}10^{-1}$ & {--- } & $1.54{\times}10^{-1}$ \\
$Q_4Q_3$ & 7442 & $1.85{\times}10^{-4}$ & {--- } & $8.98{\times}10^{-6}$ & $1.50{\times}10^{-1}$ & {--- } & $1.53{\times}10^{-1}$ \\
\bottomrule
\end{tabular}
\caption{ Holding the number of kinematic DoFs fixed, we measure the total energy conservation error, $\delta TE$, and $L_1$ error against the semi-analytic density for the cG-RK, cG-DeC-$\delta F$ and cG-DeC methods. For the $Q_1Q_0$ and $Q_2Q_1$, the cG-RK method uses the RK2Avg scheme, which is total energy conserving, and the SSP-RK3 and the RK4 integrators for $Q_3Q_2$ and $Q_4Q_3$ elements, respectively.  Similarly, for $Q_1Q_0$ and $Q_2Q_1$, we use $\rm M =1$ and $\rm K = 2$ for both the cG-DeC and cG-DeC-$\delta F$ methods where the latter has the total energy correction \eqref{eqn:deltaF} applied.  For the $Q_3Q_2$ and $Q_4Q_3$ elements, we use $\rm M = 2$ and $\rm K = 5$ and $\rm M = 3$ and $\rm K = 10$, respectively.}
\label{tab:sedov_table}
\end{table}
\begin{figure}[ht!]
    \centering
    \raisebox{2.5in}{\includegraphics[width=0.15\linewidth]{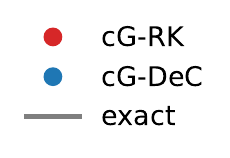}}
    \includegraphics[width=0.84\linewidth]{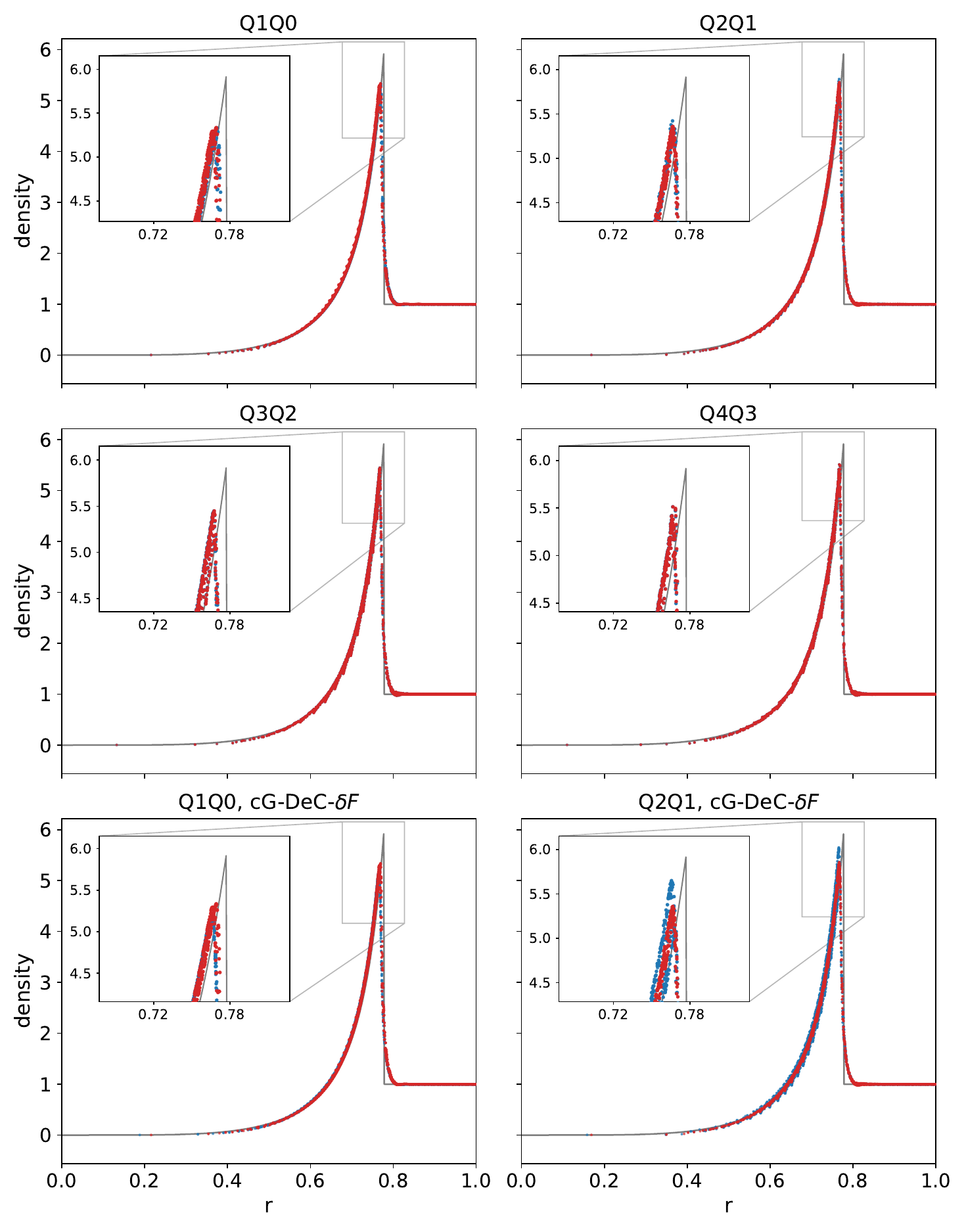}
    \caption{The Sedov blast wave for various finite element discretizations with the number of DoF kept fixed.  The cG-RK discretization uses the RK2Avg integrator for $Q_1Q_0$, $Q_2Q_1$, SSP-RK3 integrator for $Q_3Q_2$ and RK4 integrator for the $Q_4Q_3$ discretization.  The cG-DeC method uses $\rm M=1$ and $\rm K=2$ for the $Q_1Q_0$, $Q_2Q_1$ discretization, $\rm M=2$ and $\rm K=5$ for the $Q_3Q_2$ discretization and $\rm M=3$ and $\rm K=10$ for the $Q_4Q_3$ discretization. The last row shows the same cG-RK method as in the top row compared with the total energy conserving cG-DeC-$\delta F$ method for $Q_1Q_0$, $Q_2Q_1$ discretizations. See the Table \ref{tab:sedov_table} for a quantitative summary of the results. In every plot, the cG-DeC methods are depicted in blue while the cG-RK are shown in red. The exact solution (solid line) is given for reference.}
    \label{fig:sedov_density}
\end{figure}

The simulation is run to $t=0.6$ on $[0,1]\times[0,1]$.  In Table \ref{tab:sedov_table}, we summarize experiments run on quadrilateral meshes using the cG-RK, cG-DeC methods for $Q_{\rm q}Q_{\rm q-1}$ for $\rm q=1,2,3,4$ as well as the cG-DeC-$\delta F$ method when using up to second order elements.  The degrees of freedom are constant across the various discretization orders.  A visual comparison of the numerical results against the semi-analytic solution is given in Figure \ref{fig:sedov_density} below.  The results indicate a trade-off between accuracy and energy conservation. The cG-RK method demonstrates both good accuracy and energy conservation properties with the RK2Avg time integrator for the $Q_1Q_0$ and $Q_2Q_1$ elements, while we see good energy conservation when the $\delta F$ correction is applied in the cG-DeC-$\delta F$ method with $\rm M = 1$, $\rm K=2$ but an increased error compared with the cG-DeC method.  For $Q_4Q_3$ elements, the cG-RK method uses the standard RK4 integrator and exhibits the expected accuracy and modest energy conservation.  The cG-DeC method matches the accuracy of the cG-RK method with slightly better accuracy for the $Q_1Q_0$ and $Q_2Q_1$ elements with $\rm M = 1$, $\rm K=2$ and better energy conservation for the $Q_4Q_3$ elements with $\rm M = 3$, $\rm K=10$ when compared with the cG-RK method with RK4 time integrator.
\begin{table}[hb!]
    \centering
    \begin{tabular}{llrrrrrrr}
    \toprule
    Order & Method & $n_{\Delta t}$ & $n_F$ & $n_{\mathcal{M}_\mathcal{V}}$ & $n_{\mathcal{M}_\mathcal{E}}$ & $n_Q$ & Total & $\|\rho-\rho_{\mathrm{ex}}\|_{L_1}$ \\
    \midrule
      \multirow{2}{*}{$Q_1Q_0$} & cG-RK (RK2Avg) & 1\,942 & 7\,872 & 193\,279 & 3\,936 & 3\,963 & 209\,050  & 1.37e-1 \\
       & cG-DeC ($\rm M=1$, $K{=}2$) & 1\,990 & 14\,063 & 8\,036 & 4\,018 & 16\,073 & 42\,190  & 1.21e-1 \\
    \midrule
      \multirow{2}{*}{$Q_2Q_1$} & cG-RK (RK2Avg) & 1\,813 & 7\,408 & 128\,344 & 3\,704 & 3\,744 & 143\,200  & 1.51e-1 \\
       & cG-DeC ($\rm M=1$, $K{=}2$) & 1\,797 & 12\,789 & 7\,308 & 3\,654 & 14\,617 & 38\,368  & 1.49e-1 \\
    \midrule
      \multirow{3}{*}{$Q_3Q_2$} & cG-RK (SSP-RK3) & 1\,282 & 7\,890 & 117\,624 & 11\,833 & 3\,979 & 141\,326  & 1.49e-1 \\
       & cG-DeC ($\rm M=2$, $K{=}5$) & 1\,274 & 36\,512 & 31\,296 & 15\,648 & 37\,817 & 121\,273  & 1.54e-1 \\
       & cG-DeC ($\rm M=2$, $K{=}3$) & 1\,277 & 20\,944 & 15\,708 & 7\,854 & 22\,254 & 66\,760  & 1.54e-1 \\
    \midrule
      \multirow{3}{*}{$Q_4Q_3$} & cG-RK (RK4) & 1\,301 & 10\,664 & 147\,174 & 15\,994 & 5\,365 & 179\,197  & 1.50e-1 \\
       & cG-DeC ($\rm M=3$, $K{=}10$) & 1\,317 & 103\,719 & 96\,984 & 48\,492 & 105\,067 & 354\,262 & 1.53e-1 \\
       & cG-DeC ($\rm M=3$, $K{=}4$) & 1\,299 & 38\,570 & 31\,920 & 15\,960 & 39\,901 & 126\,351  & 1.54e-1 \\
    \bottomrule
    \end{tabular}
    \caption{Comparison of the cG-DeC and cG-RK methods in terms of the number of time steps, force tensor applies, mass matrix applies, quadrature function evaluations and $L_1$ error of the density for the Sedov problem.  Here, we explore using the theoretical number of corrections and the number of corrections needed for convergence in the smooth test above. On this strong shock problem, we note that the error for the two choices of the number of corrections is little changed while there is a substantial gain in the efficiency of the cG-DeC method.}
    \label{tab:sedov_eff_table}
\end{table}

 In Table \ref{tab:sedov_eff_table}, we show the number of time steps taken, $n_{\Delta t}$; the number of force tensor evaluations, $n_F$; the number of kinematic mass matrix applies, $n_{\mathcal{M}_{\mathcal{V}}}$; the number of thermodynamic mass matrix applies, $n_{\mathcal{M}_{\mathcal{E}}}$; the number of quadrature function evaluations, $n_Q$; and the sum of these represented as ``Total'' similarly to Table \ref{tab:tgv_comparison} above.  The problem setup is the same as in Table \ref{tab:sedov_table}, where we keep the number of DoF fixed across the different spatial discretization orders, and the simulation is run to $t=0.6$, again. For the higher-order results we explore two choices for the number of corrections, the number of corrections needed for convergence in the smooth problem above and the theoretical number of corrections, since at most first order convergence is possible.  The results indicate that using the theoretical number of corrections increases the error very slightly, or not at all for the number of digits reported here in the $Q_3Q_2$ case, when compared to the solution obtained with the measured number of corrections needed for high-order convergence. Unsurprisingly, using the theoretical number of corrections provides a substantial gain in efficiency.  

 We remark that the cG-DeC method's efficiency relies on the number of mass matrix applies dominating the solution cost as the number of force tensor applies and quadrature function evaluations are always higher for the cG-DeC method when compared to the cG-RK method.  For more complicated equations of state, wherein the force tensor applies and quadrature function evaluations become even more substantial, the cG-DeC method may cease to be competitive with the cG-RK method.  In the present work, we focus only on the ideal gas equation of state and leave the generalization to more complicated equations of state to the future.

\subsection{Noh}\label{sec:Noh}
The Noh problem \cite{Noh.1987,Rider.WallHeating.2000} is a classic test for Lagrangian simulations.  The solution is an infinite strength shock wave.  The density is set to $\rho=1$ with the pressure $p=0$ and the velocity directed toward the origin with a magnitude of one and the adiabatic constant is $\gamma = \frac53$. The simulation domain is $[0,1]^2$ discretized with quadrilateral elements.  The simulation is run until $t=0.6$ so that the shock is located radially at $\| \bs x \| = 0.2$ and the resulting postshock density is $\rho=16$ as shown in Figure \ref{fig:Noh_den} below. 
\begin{figure}[ht!]
    \centering
    \includegraphics[width=0.6\linewidth]{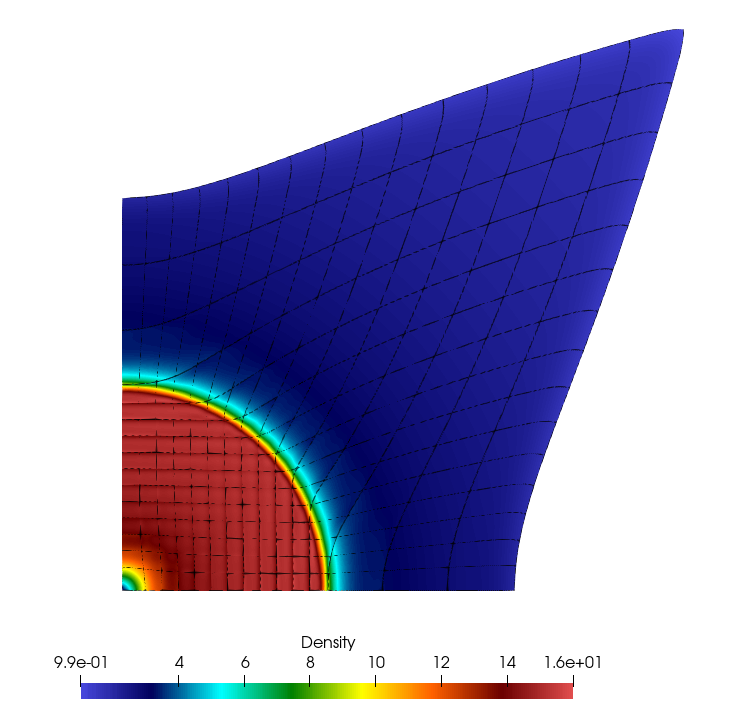}
    \caption{Density field for the Noh test problem with $Q_4Q_3$ spatial discretization using the $\rm M=3$, $\rm K=10$ cG-DeC method.}
    \label{fig:Noh_den}
\end{figure}

In Figure \ref{fig:Noh_sweep}, we show the results of a similar set of experiments to those done in the Sedov test case.  We use the same meshes and same fixed DoF count as in the Sedov tests above.  The top row of the figure shows the cG-DeC and cG-RK methods with $Q_1Q_0$, $Q_2Q_1$ spatial discretizations using the RK2Avg integrator for the cG-RK case and $\rm M=1$, $\rm K=2$ second-order DeC integrator for the cG-DeC case.  We also see the results of the $Q_3Q_2$, $Q_4Q_3$ spatial discretizations with the SSP-RK3 and RK4 integrators, respectively, for the cG-RK case and $\rm M=2$, $\rm K=5$ third-order DeC integrator and $\rm M=3$, $\rm K=10$ fourth-order DeC integrator, respectively, for the cG-DeC case.  The bottom row of the figure shows the total energy corrected cG-DeC-$\delta F$ method against the cG-RK method.  The solutions produced by the cG-DeC and the cG-RK methods are remarkably similar especially at higher-order, with the cG-DeC method producing slightly better symmetry preservation.  For the cG-DeC-$\delta F$ case, the shock location is somewhat improved for the $Q_1Q_0$ but the symmetry preservation is much worse than the uncorrected cG-DeC method for the $Q_2Q_1$ case.
\begin{figure}
    \centering
    \raisebox{2in}{\includegraphics[width=0.15\linewidth]{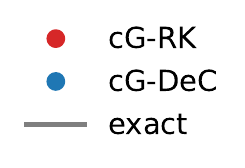}}
    \includegraphics[width=0.84\linewidth]{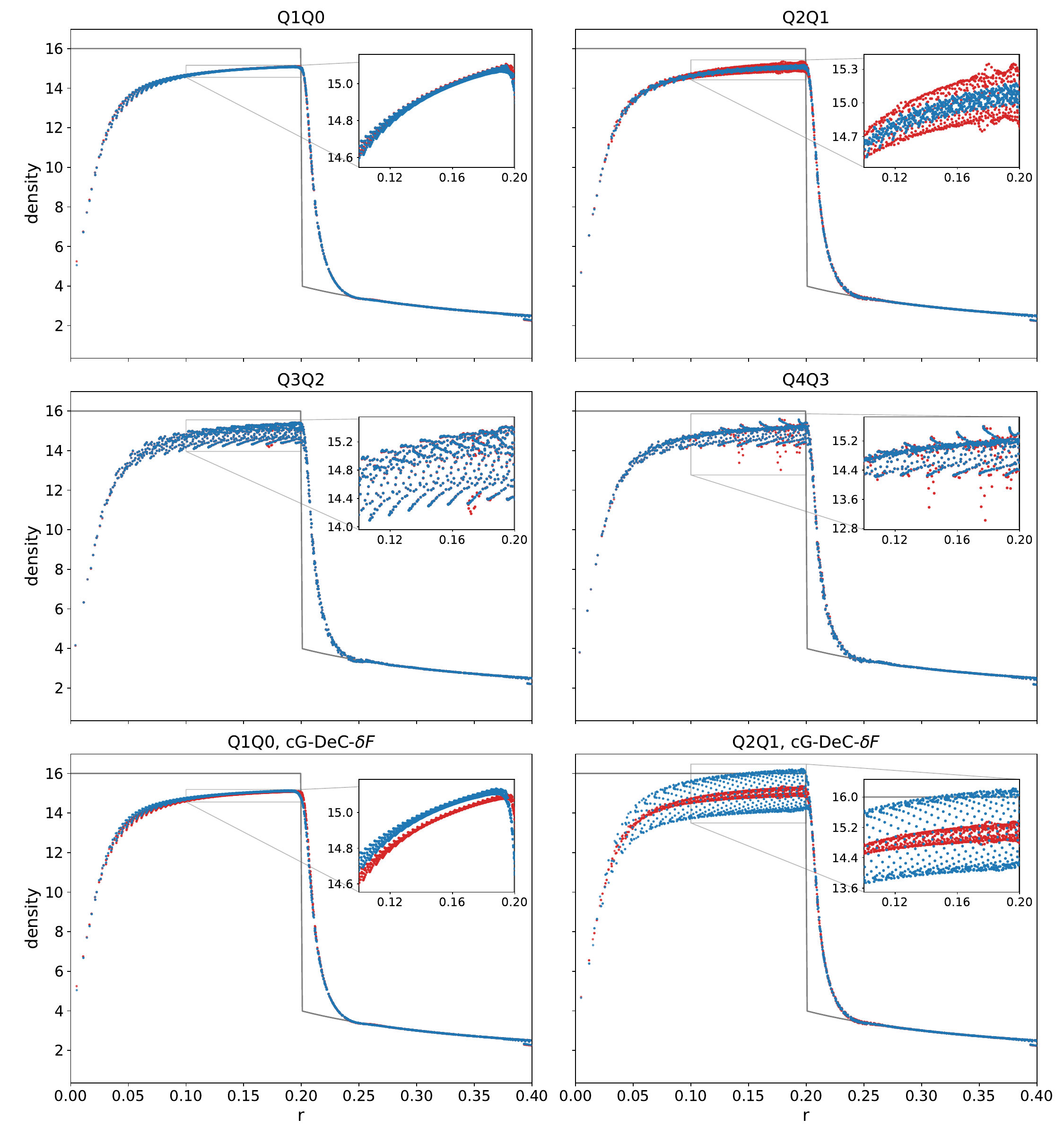}
    \caption{The computed density field for the Noh test problem across several discretization orders with the DoF held fixed across orders.  As in the Sedov test, the cG-RK method is shown in red while the cG-DeC method is shown in blue. The exact solution is provided for reference.  The top row shows the cG-DeC and cG-RK methods with $Q_1Q_0$, $Q_2Q_1$ spatial discretizations using the RK2Avg integrator for the cG-RK case and $\rm M=1$, $\rm K=2$ second-order DeC integrator for the cG-DeC case.  The middle row shows $Q_3Q_2$, $Q_4Q_3$ spatial discretizations with the SSP-RK3 and RK4 integrators, respectively, for the cG-RK case and $\rm M=2$, $\rm K=5$ third-order DeC integrator and $\rm M=3$, $\rm K=10$ fourth-order DeC integrator, respectively, for the cG-DeC case.  The bottom row shows the total energy corrected cG-DeC-$\delta F$ method against the cG-RK method.}
    \label{fig:Noh_sweep}
\end{figure}

\subsection{Triple Point}\label{sec:TrplPnt}
The triple point test problem consists of three ideal gas regions. The shock produced by the Riemann problem
induces vortical flow at the intersection of the three materials, thus testing a Lagrangian code's ability to
accurately resolve shock fronts and handle strong mesh deformations. Using the notation $\boldsymbol{x}(t,\hat{\bs x})=(x,y,z)^T$, the problem is initialized by specifying the pressure to be $p=1$ if $x\leq 1$ and $p(0,\bs{x}(0,\hat{\bs x}))=0.1$ if $x>1$, the adiabatic constant to be $\gamma=1.5$ unless $x>1$ and $y\leq 1.5$ in which case it is set to be $\gamma = 1.4$, the density is set as $\rho(0,\bs{x}(0,\hat{\bs x}))=1$ unless $x>1$ and $y \leq 1.5$ and $z \leq 1.5$ or $y>1.5$ and $z>1.5$ in which case we have $\rho(0,\bs{x}(0,\hat{\bs x}))=0.125$ in those regions.  See Table \ref{tab:3dTrplPnt_IC} for a summary of the initial conditions and Figure \ref{fig:3dTrplPnt_IC} for a depiction of the internal energy field.
\begin{table}[ht!]
    \centering
    \begin{tabular}{llll}
    \toprule
    \midrule
    $p$ & $1$ & $x \leq 1$ \\
    & $0.1$ & $x > 1$ \\
    \midrule
    $\gamma$ & $1.4$ & $x > 1,\ y \leq 1.5$ \\
    & $1.5$ & otherwise \\
    \midrule
    $\rho$ & $0.125$ & $x > 1 \text{ and } \{\ y \leq 1.5,\ z \leq 1.5\} \text{ or } \{y > 1.5,\ z > 1.5\}$ \\
    & $1$ & otherwise \\
    \bottomrule
    \end{tabular}
    \caption{Initial conditions for the Triple Point problem.}
    \label{tab:3dTrplPnt_IC}
\end{table}

\begin{figure}[ht!]
    \centering
    \includegraphics[width=0.7\linewidth]{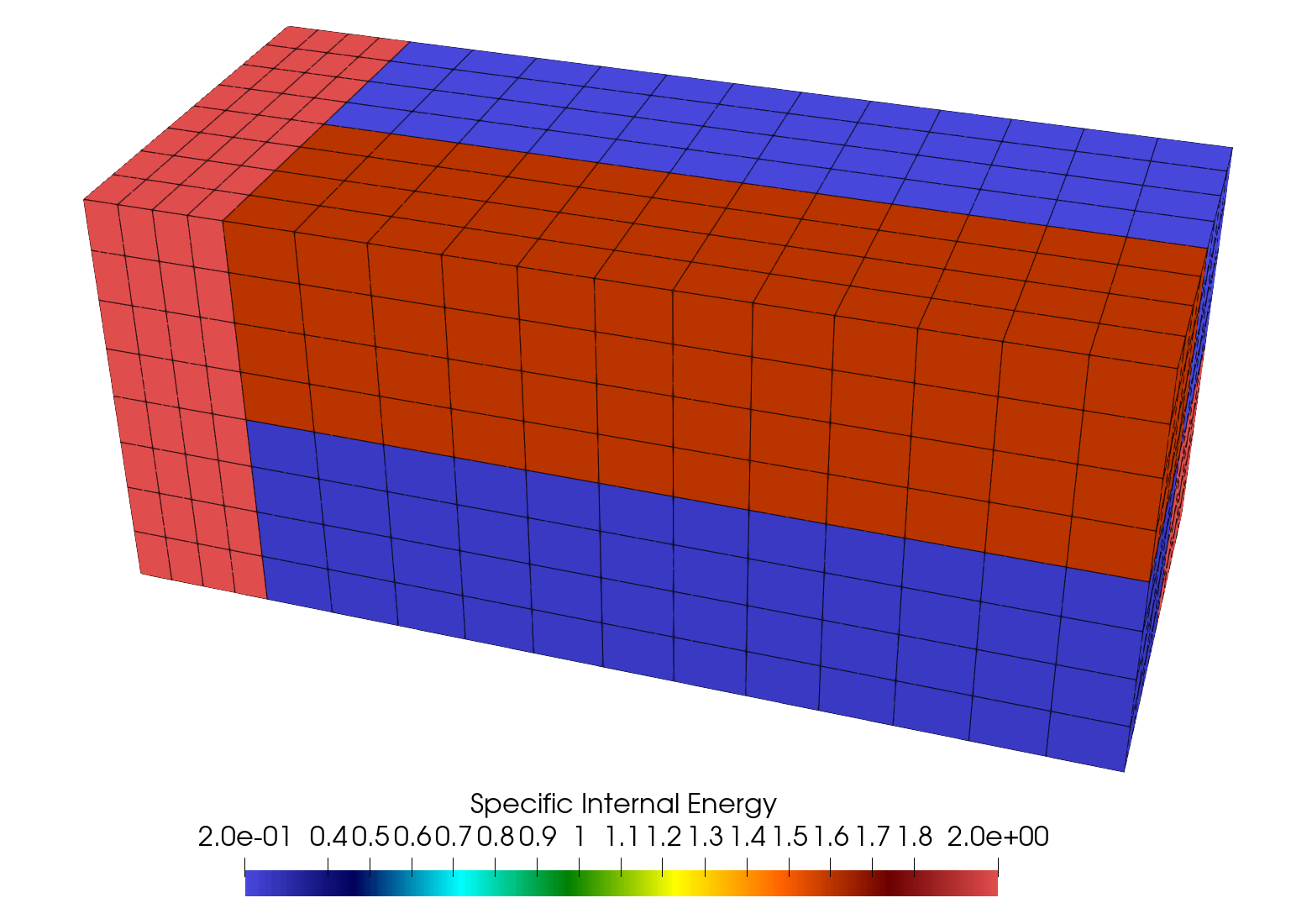}
    \caption{Initial conditions for the Triple Point problem.  The internal energy field is shown for a $Q_4Q_3$ spatial discretization.}
    \label{fig:3dTrplPnt_IC}
\end{figure}

Once again, we run the cG-RK and cG-DeC methods on the problem now only using the $Q_4Q_3$ spatial discretization with the RK4 integrator for the cG-RK method and fourth order DeC integrator with $\rm M=3$ and $\rm K=10$.  We compare the two solutions by measuring the relative $L_1$ difference using the cG-RK method as the reference solution.  Both solutions are run on a $[0,7]\times [0,3]\times[0,3]$ domain using a $16\times 8\times 8$ element hexahedral mesh until $t=3.3$.  The results are reported in Table \ref{tab:TrplPnt_rel_diff} below.
\begin{table}[hb!]
    \centering
    \begin{tabular}{llll}
    \toprule
    \midrule
    \multicolumn{2}{l}{Relative $L_1$ difference} & \multicolumn{2}{c}{$\delta TE$} \\
    \midrule
    Density & $1.16 \times 10^{-4}$ & cG-RK & $3.64 \times 10^{-4}$ \\
    Internal Energy & $9.87 \times 10^{-5}$ & cG-DeC & $1.90 \times 10^{-5}$ \\
    \bottomrule
    \end{tabular}
    \caption{Relative $L_1$ difference and total energy differences comparing the cG-DeC and cG-RK methods on the Triple Point problem.}
    \label{tab:TrplPnt_rel_diff}
\end{table}

In Figure \ref{fig:TrplPnt_Den_Sie}, the density and internal energy are shown at the final time $t=3.3$, indicating that the high-order $Q_4Q_3$ simulation effectively resolves the fields on the lower resolution mesh and that the presented cG-DeC method is robust on a complicated three-dimensional problem. The cG-RK results are visually indistinguishable from the cG-DeC and are not included.
\begin{figure}[ht!]
    \centering
    \includegraphics[width=0.7\linewidth]{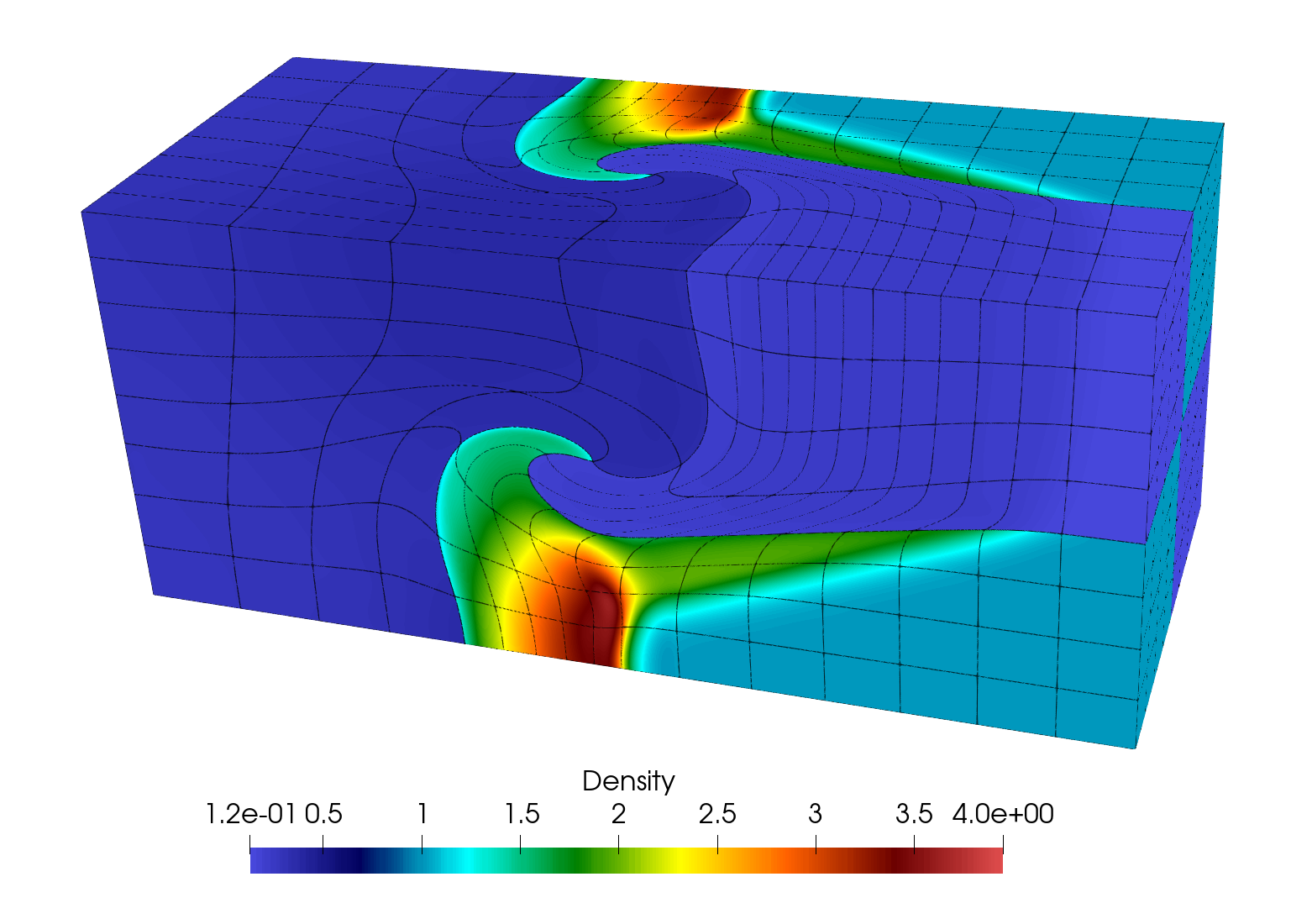}
    \includegraphics[width=0.7\linewidth]{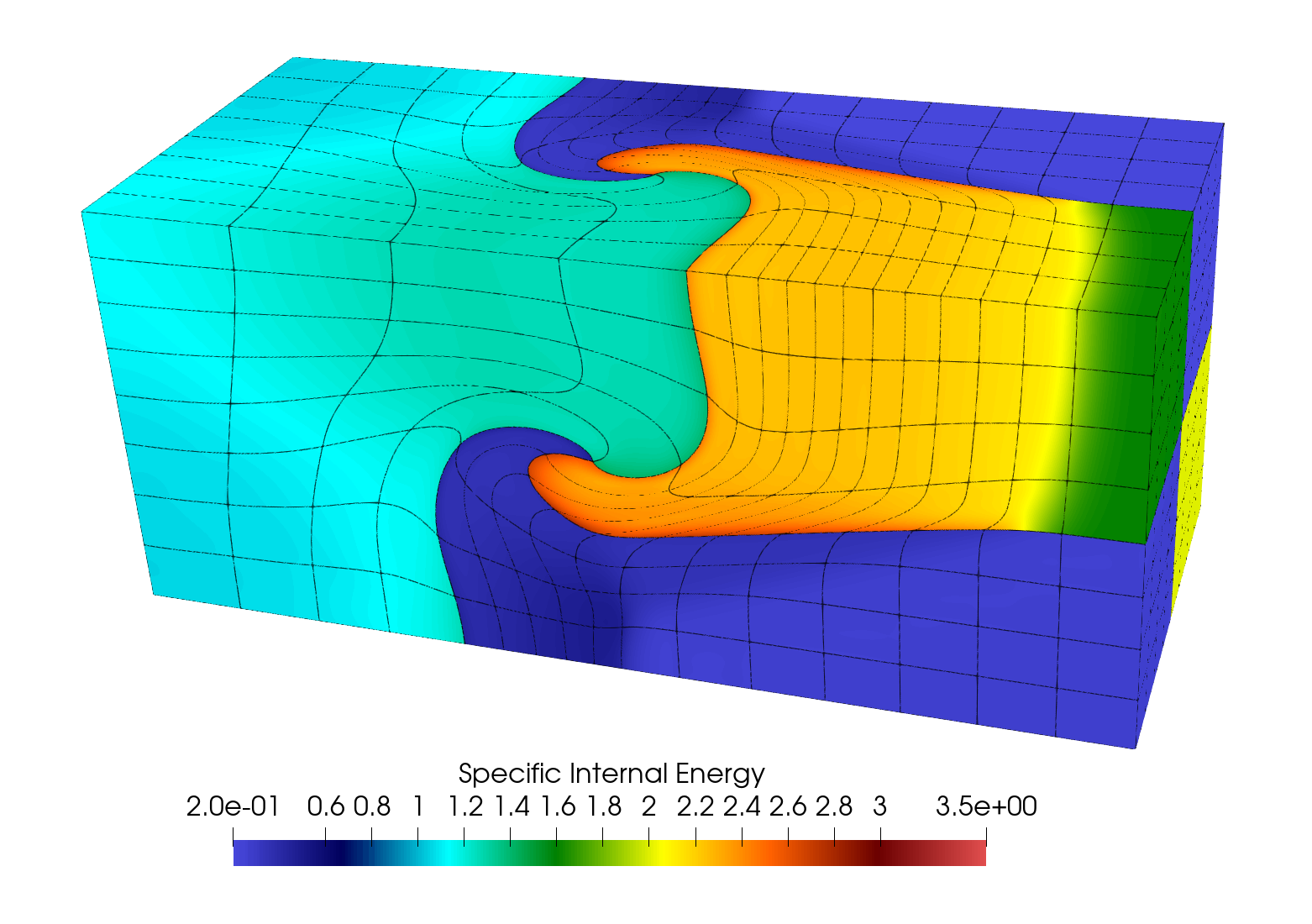}
    \caption{The density (top) and internal energy (bottom) at $t=3.3$ using the cG-DeC method with $Q_4Q_3$ elements with $\rm M=3$ and $\rm K=10$ cG-DeC discretization.}
    \label{fig:TrplPnt_Den_Sie}
\end{figure}

\section{Conclusion}\label{sec:Conclusions}
We presented a cG-DeC method for the equations of Lagrangian hydrodynamics. Total energy conservation and conservation of momentum are considered for the fully discrete scheme.  A new formulation for the DeC iteration is presented.  The method is validated on standard benchmarks and compared with the continuous Galerkin Runge-Kutta method introduced in \cite{Dobrev.2012}.  The cG-DeC method was shown to have comparable accuracy and demonstrated better efficiency on the Taylor-Green vortex problem for $Q_1Q_0$ and $Q_2Q_1$ elements, while the extra number of corrections, as measured against the theoretical number found in \cite{Abgrall.2017}, reduces the efficiency of the method for kinematic polynomial orders greater than two. Further, we compared the cG-RK and cG-DeC methods on the Sedov blast wave.  The $L_1$ errors for the cG-DeC and cG-RK methods were close to identical across polynomial orders, while the energy correction increased the measured $L_1$ errors.  The efficiency comparison was repeated, using the number of corrections needed to obtain the expected convergence rates in the previous convergence test, as well as the theoretical number of corrections.  We found that the theoretical number of corrections had little impact on the global $L_1$ errors while improving the efficiency. A similar set of tests was performed for the Noh test problem, where comparisons of the various methods are made against the exact solution.  The obtained results were found to capture the peak postshock density well and preserve symmetry.  Finally, we compared a $Q_4Q_3$ discretization of the Triple Point problem using the cG-RK and cG-DeC methods and compared the global $L_1$ difference of the solutions.  The cG-DeC method was demonstrated to provide robust solutions on this complicated problem.

The presented iteration \eqref{eqn:NeumannDeC} makes clear that the convergence in the PDE case also depends on the spectral radius of the matrix $A = I- \mathcal{D}^{-1}\mathcal{M}$, which can be close to one, though this depends on the polynomial order, dimension of the problem and basis type.  However, this suggests a path to lower the number of corrections by preconditioning the DeC operators to decrease the spectral radius of $A$.  It is also interesting to consider alternative DeC formulations, using adaptive or multilevel grids in time \cite{MicalizziTorlo2024, Ong.Spiteri.2020, Speck.2015, Speck.2016, Saupe.2025} to obtain better efficiency.

\backmatter

\bmhead{Acknowledgements}

The Los Alamos unlimited release number is LA-UR-25-29785.

\section*{Declarations}

\begin{itemize}
\item We gratefully acknowledge the support of the NNSA through the Laboratory Directed Research and Development
(LDRD) program at Los Alamos National Laboratory under project number 20230028DR. Los Alamos National Laboratory is
operated by Triad National Security, LLC for the U.S. Department of Energy’s NNSA. 
\item The authors have no relevant financial or non-financial interests to disclose.
\item The authors assert that there are no conflicts of interest, potential or otherwise.
% \item Ethics approval and consent to participate
% \item Consent for publication
% \item Data availability 
% \item Materials availability
% \item Code availability 
% \item Author contribution
\end{itemize}

% \noindent

%%===========================================================================================%%
%% If you are submitting to one of the Nature Portfolio journals, using the eJP submission   %%
%% system, please include the references within the manuscript file itself. You may do this  %%
%% by copying the reference list from your .bbl file, paste it into the main manuscript .tex %%
%% file, and delete the associated \verb+\bibliography+ commands.                            %%
%%===========================================================================================%%

\bibliography{references}% common bib file
%% if required, the content of .bbl file can be included here once bbl is generated
%%\input sn-article.bbl

\end{document}